\newtheorem{theorem}{Theorem}[section]
\newtheorem{proposition}[theorem]{Proposition}
\newtheorem{lemma}[theorem]{Lemma}
\newtheorem{remark}[theorem]{Remark}
\def\R{\mathbb{R}}
\def\N{\mathbb{N}}
\def\I{\infty}
\newcommand{\be}{\begin{equation}}
\newcommand{\ee}{\end{equation}}
\newcommand{\bea}{\begin{eqnarray}}
\newcommand{\eea}{\end{eqnarray}}
\newcommand{\beann}{\begin{eqnarray*}}
\newcommand{\eeann}{\end{eqnarray*}}
\newcommand{\benn}{\begin{equation*}}
\newcommand{\eenn}{\end{equation*}}
\newcommand{\xx}[1]{\,\text{ #1 }\,}
\newcommand{\Xx}[1]{\quad\text{ #1 }\,}
\newcommand{\XX}[1]{\quad\text{ #1 }\quad}
\def\ra{\rightarrow}
\def\I{\infty}
\newcommand{\cA}{{\mathcal A}}  
\newcommand{\cF}{{\mathcal F}}  
\newcommand{\cS}{{\mathcal S}}  
\renewcommand{\d}[1][x]{\,\operatorname{d}\!#1}
\newcommand{\dxi}{\d[\xi]}
\newcommand{\Riesz}{D_0^\alpha}  
\newcommand{\RieszFeller}{D_\theta^\alpha}  
\newcommand{\Green}{G_\theta^\alpha}  
\newcommand{\Fourier}{\mathcal{F}}
\newcommand{\FourierInv}{\mathcal{F}^{-1}}
\newcommand{\SchwartzTF}{\mathcal{S}} 
\newcommand{\integral}[3]{\int_{#1} \, {#2} \, \d[#3]\,}
\newcommand{\integrall}[4]{\int_{#1}^{#2} \, {#3} \, \d[#4]\,}
\newcommand{\abs}[1]{|#1|}
\newcommand{\difff}[3]{\tfrac{\partial^{#3} {#1}}{\partial {#2}^{#3}}}
\newcommand{\diff}[2]{\tfrac{\partial {#1}}{\partial {#2}}}
\newcommand{\norm}[1]{\| {#1} \|}
\newcommand{\um}{u_-}
\newcommand{\up}{u_+}
\newcommand{\upm}{u_\pm}
\DeclareMathOperator{\dom}{dom}
\DeclareMathOperator{\sgn}{sgn}
\begin{document}

\title{Analysis and numerics of traveling waves for asymmetric fractional reaction-diffusion equations}

\author{Franz Achleitner\thanks{Institute for Analysis and Scientific Computing, Vienna University of Technology, 1040 Vienna, Austria}, Christian Kuehn\thanks{Institute for Analysis and Scientific Computing, Vienna University of Technology, 1040 Vienna, Austria}}

\maketitle 
\begin{abstract}
We consider a scalar reaction-diffusion equation in one spatial dimension with bistable nonlinearity
 and a nonlocal space-fractional diffusion operator of Riesz-Feller type. 
We present our analytical results on the existence, uniqueness (up to translations) and stability of a traveling wave solution 
 connecting two stable homogeneous steady states. 
Moreover, we review numerical methods for the case of reaction-diffusion equations with fractional Laplacian
 and discuss possible extensions to our reaction-diffusion equations with Riesz-Feller operators.
In particular, we present a direct method using integral operator discretization in combination with projection boundary conditions to visualize our analytical results about traveling waves.
\end{abstract}

\textbf{Keywords:} Traveling wave, Nagumo equation, real Ginzburg-Landau equation, Allen-Cahn type equation,
 Riesz-Feller operator, nonlocal diffusion, fractional derivative, comparison principle, quadrature, projection boundary conditions.


\section{Introduction.}
A scalar reaction-diffusion equation is a partial differential equation 
\begin{equation} \label{eq:RD:local}
   \diff{u}{t} = \difff{u}{x}{2} + f(u) \Xx{for} (x,t)\in\R\times (0,T]\,,
\end{equation}
where the spatial derivative models diffusion 
and (a nonlinear) function $f$ models reaction of some quantity $u=u(x,t)$ over time.  
The application and analysis of reaction-diffusion equations has a long
 history~\cite{Aronson+Weinberger:1974, Smoller:1994, Volpert+etal:1994}.

In the following,
 we consider equation~\eqref{eq:RD:local} with a \emph{bistable} nonlinear function $f\in C^1(\R)$ such that
 \begin{equation} \label{As:f:bistable} 
  \exists \um<a<\up \xx{in} \R\,: \quad
  f(u) \begin{cases}
  	    =0 & \xx{for} u\in \{\um\,, a\,, \up\}\,, \\
	    	<0 & \xx{for} u\in (\um,a)\,, \\
		   	>0 & \xx{for} u\in (a,\up)\,,
			 \end{cases}
 \end{equation}
 \[ f'(\um)<0\,, \quad f'(\up)<0\,. \] 

This kind of reaction-diffusion equation is known as
 Nagumo's equation to model propagation of signals~\cite{McKean:1970, Nagumo},
 as one-dimensional real Ginzburg-Landau equation (RGLE) to model long-wave amplitudes
  e.g. in case of convection in binary mixtures near the onset of instability~\cite{Newell+Whitehead:1969, Segel:1969}, 
 as well as Allen-Cahn equation to model phase transitions in solids~\cite{Allen+Cahn:1979}.

Following Allen and Cahn, 
 a stable stationary state - such as $\um$ and $\up$ - represents a phase of the system,
 whereas a traveling wave solution $u(x,t)=U(x-ct)$ with $\lim_{\xi\to\pm\infty} U(\xi)=\upm$ represents a phase transition. 
Each stationary state $u_*$ has an associated potential $F(u_*)=F(\um)+\integrall{\um}{u_*}{f(v)}{v}$.
One distinguishes between the balanced case,
 i.e. the stable states $\um$ and $\up$ have the same potential $F(\um)=F(\up)$,
 and the unbalanced case,
 where the stable state with lesser potential value $F(u)$ is called the metastable state.
Then a traveling wave solution $u(x,t)=U(x-ct)$ connecting the stable states $\um$ and $\up$
 will be stationary ($c=0$) in the balanced case 
 and moving in the direction of the metastable state in the unbalanced case.

In some applications it is important to include nonlocal effects.
For example, Bates et al.~\cite{Bates+etal:1997} proposed a non-local model 
\begin{equation} \label{eq:RD:Bates}
 \diff{u}{t} = J \ast u - u + f(u) \Xx{for} (x,t)\in\R\times (0,T]\,,
\end{equation}
for even, non-negative functions $J\in C^1(\R)$ with 
  \[ \integral{\R}{J(y)}{y}=1\,, \quad 
     \integral{\R}{\abs{y} J(y)}{y}<\infty\,, \quad
     J'\in L^1(\R)\,,
  \]
and bistable functions~$f$.
The assumptions on $J$ ensure that the problem exhibits
 a maximum principle and a variational formulation.
The existence of traveling wave solutions $u(x,t)=U(x-ct)$ is concluded
 from a homotopy of~\eqref{eq:RD:Bates} to a classical reaction-diffusion model~\eqref{eq:RD:local}.
Moreover the traveling wave again will move depending on the balance of the potential values of the stable states.
In contrast,
 the asymptotic stability is established only for stationary traveling wave solutions, i.e. in the balanced case,
 where an additional variational structure is available. 

Chen established a unified approach~\cite{Chen:1997} to prove the existence, uniqueness
 and asymptotic stability with exponential decay of traveling wave solutions
 for the previous reaction-diffusion equations and many more examples from the literature.
He considers general nonlinear nonlocal evolution equations in the form
 \[ \diff{u}{t} (x,t) = \mathcal{A}[u(\cdot,t)](x) \Xx{for} (x,t)\in\R\times (0,T]\,, \]
 where the nonlinear operator $\mathcal{A}$ is assumed to 
 \begin{enumerate}
  \item be independent of $t$;
  \item generate a $L^\infty$ semigroup;
  \item be translational invariant, i.e. $\cA$ satisfies for all $u\in\dom\cA$ the identity
   \[ \mathcal{A}[u(\cdot+h)](x) = \mathcal{A}[u(\cdot)](x+h)\quad \forall x\,,h \in\R\,. \]
   Consequently, there exists a function $f:\R\to\R$
   which is defined by $\cA[\alpha \mathbf 1]=f(\alpha)\mathbf 1$
	 for $\alpha\in\R$ and the constant function $\mathbf 1:\R\to\R$, $x\to 1$.
  This function $f$ is assumed to be bistable~\eqref{As:f:bistable};
  \item satisfy a comparison principle
   \begin{quote}
    If $\diff{u}{t}\geq\mathcal{A}[u]$, $\diff{v}{t}\leq\mathcal{A}[v]$ and $u(\cdot ,0)\gneq v(\cdot ,0)$,
    then $u(\cdot ,t)>v(\cdot ,t)$ for all $t>0$.
   \end{quote}
 \end{enumerate}
Chen's approach relies on the comparison principle
 and the construction of sub- and supersolutions for any given traveling wave solution.
Importantly, the method does not depend on the balance of the potential. 

At the same time, Zanette~\cite{Zanette:1997} proposed a model
 \begin{equation} \label{eq:RD:Zanette}
   \diff{u}{t} = \Riesz u + f(u) \Xx{for} (x,t)\in\R\times (0,T]\,,
 \end{equation}
 with a fractional Laplacian~$\Riesz$ for some $\alpha\in (0,2)$ 
 and an explicit bistable function $f$.
This model exhibits monotone traveling wave solutions having an explicit integral representation,
 hence the asymptotic behavior of front tails and the front width can be studied directly.
Subsequently, the reaction-diffusion equation~\eqref{eq:RD:Zanette}
 with fractional Laplacian and general bistable function~$f$
 has been studied in the literature~\cite{Zanette:1997, Nec+etal:2008, Volpert+etal:2010, Cabre+Sire:2010, Cabre+Sire:2011, Palatucci+etal:2013, Gui:2012, Chmaj:2013}. 

Engler~\cite{Engler:2010} was one of the first to consider the scalar partial integro-differential equations 
\begin{equation} \label{RD}
  \diff{u}{t} = \RieszFeller u + f(u) \Xx{for} (x,t)\in\R\times (0,T]\,,
\end{equation}
where $u=u(x,t)$, $f\in C^1(\R)$ is a (bistable) nonlinear function,
and $\RieszFeller$ is a Riesz-Feller operator with $1<\alpha\leq 2$ and $\abs{\theta}\leq \min\{\alpha,2-\alpha\}$.
A Riesz-Feller operator~$\RieszFeller$ of order $\alpha$ and skewness $\theta$ can be defined as a Fourier multiplier operator,
 see also the exposition of Mainardi, Luchko and Pagnini~\cite{MainardiLuchkoPagnini}.
Starting from the fundamental solution of $\diff{u}{t} = \RieszFeller u$, 
 Engler constructs traveling wave solutions for some appropriate bistable function~$f$.
Assuming the existence of traveling wave solutions for general functions~$f$,
 Engler studies the finiteness of the wave speed.
The existence, uniqueness (up to translations), and stability of traveling wave solutions for general bistable functions is left open.

\subsection{Main analytical result.}
\label{ssec:main}

Our main result is summarized in the following theorem. 
\begin{theorem}[\cite{AchleitnerKuehn1}]
\label{thm:main}
Suppose $1<\alpha\leq 2$, $\abs{\theta} \leq \min\{\alpha,2-\alpha\}$ and $f\in C^\infty(\R)$ satisfies~\eqref{As:f:bistable}.
Then equation~\eqref{RD} admits a traveling wave solution $u(x,t)=U(x-ct)$ satisfying
\begin{equation} \label{As:TWS}
  \lim_{\xi\to\pm\infty} U(\xi)=\upm \Xx{and} U'(\xi)>0 \Xx{for all} \xi\in\R\,.
\end{equation}
In addition, a traveling wave solution of~\eqref{RD} is unique up to translations.
Furthermore, traveling wave solutions are globally asymptotically stable in the sense that
 there exists a positive constant $\kappa$ such that 
 if $u(x,t)$ is a solution of~\eqref{RD} with initial datum~$u_0\in C_b(\R)$ satisfying $0\leq u_0\leq 1$ and
 \begin{equation} \label{As:stability}
  \liminf_{x\to\infty} u_0(x) > a\,, \qquad \limsup_{x\to-\infty} u_0(x) < a\,,
 \end{equation}
 then, for some constants $\xi$ and $K$ depending on $u_0$,
 \[ \norm{u(\cdot,t)-U(\cdot- ct + \xi)}_{L^\infty(\R)} \leq K e^{-\kappa t} \qquad \forall t\geq 0\,. \] 
\end{theorem}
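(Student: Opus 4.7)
The plan is to fit equation~\eqref{RD} into Chen's unified framework~\cite{Chen:1997}, applied to the nonlinear nonlocal operator
\[
 \cA[u] := \RieszFeller u + f(u).
\]
The bulk of the work reduces to verifying the four hypotheses listed in the discussion above: time-independence, generation of an $L^\infty$ semigroup, translation invariance, and a comparison principle. Once these are in hand, Chen's theorem yields at one stroke the existence of a monotone wave profile satisfying~\eqref{As:TWS}, its uniqueness up to translations, and the exponential asymptotic stability under the class of initial data described in~\eqref{As:stability}, with a uniform rate $\kappa>0$.

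Time-independence and translation invariance are immediate: $\RieszFeller$ is a Fourier multiplier whose symbol depends on neither $t$ nor $x$, and $f$ acts pointwise, so $\cA[\alpha\mathbf 1]=f(\alpha)\mathbf 1$ matches Chen's bistability requirement via~\eqref{As:f:bistable}. For the $L^\infty$ semigroup I would use the representation of $\euler^{t\RieszFeller}$ as convolution with the stable density $\Green(\cdot,t)$, which for $1<\alpha\leq 2$ and $\abs\theta\leq\min\{\alpha,2-\alpha\}$ is a genuine probability density on $\R$; the resulting contraction semigroup on $C_b(\R)$, together with the Lipschitz bound on $f$ over an invariant interval containing $[\um,\up]$, produces a well-posed mild-solution semiflow by a standard fixed-point argument.

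The main obstacle is the comparison principle. The admissible range of $(\alpha,\theta)$ is precisely the one in which $\RieszFeller$ arises as the infinitesimal generator of an $\alpha$-stable L\'evy process; equivalently, it admits the integro-differential form
\[
 \RieszFeller u(x) = \int_\R \bigl[u(x+y)-u(x)-y\,u'(x)\chi_{\{\abs y<1\}}\bigr]\,\nu_{\alpha,\theta}(\d[y]),
\]
with a \emph{nonnegative} L\'evy measure $\nu_{\alpha,\theta}$. From this positive-kernel structure one derives the parabolic comparison principle as follows: if $w:=u-v$ satisfies $\diff{w}{t}-\RieszFeller w - c(x,t)\,w\geq 0$ with a bounded coefficient $c$ coming from a mean-value expansion of $f(u)-f(v)$, and $w(\cdot,0)\gneq 0$, then absorbing $c$ into an exponential weight and applying Duhamel's formula against the nonnegative kernel $\Green$ produces $w(\cdot,t)>0$ for all $t>0$ via a Gr\"onwall-type argument.

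With all four hypotheses verified, Chen's construction delivers the monotone profile $U$ as the long-time limit of solutions squeezed between explicit sub- and super-solutions built from shifts of the stable constants $\upm$ together with small decaying corrections; his squeezing/contraction argument then simultaneously yields uniqueness up to translations and the exponential convergence rate $\kappa$ claimed in the stability estimate, completing the proof. The delicate parts I would expect are the fine tail estimates on $\Green$ needed to make the sub/super-solution construction work uniformly as $\theta\neq 0$, since the loss of symmetry rules out purely variational or Fourier-symmetric arguments used in the fractional Laplacian case.
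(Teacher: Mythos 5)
Your strategy is essentially the one the paper follows: recast \eqref{RD} as $\partial_t u=\cA[u]$ with $\cA[u]=\RieszFeller u+f(u)$, obtain well-posedness of the Cauchy problem on $C_b(\R)$ from the convolution semigroup generated by the stable density $\Green$ (Theorem~\ref{thm:CP}), derive the comparison principle from the nonnegativity of the L\'evy measure in the singular-integral representation of $\RieszFeller$ (Lemma~\ref{lem:comparison:Cb}), and then run Chen's sub-/super-solution squeezing machinery to get the monotone profile as a long-time limit of the Cauchy flow, uniqueness up to translation, and exponential stability. In outline this matches the paper.

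The one place your plan as written would stall is the concluding step ``once these are in hand, Chen's theorem yields at one stroke \dots''. The authors state explicitly that Chen's result cannot be invoked as a black box for Riesz--Feller operators: when checking his hypotheses they find that some of the required estimates are \emph{not} of the form his abstract framework demands, and the existence/uniqueness/stability conclusions only follow after extending Chen's argument itself (this is the content of Appendices~A--C of \cite{AchleitnerKuehn1}). So verifying the four structural hypotheses (time-independence, $L^\infty$ semigroup, translation invariance, comparison) is necessary but not sufficient; you would additionally have to repair the quantitative ingredients of Chen's scheme --- precisely the sub-/super-solution and kernel tail estimates that you flag at the end as ``delicate'' but defer. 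Apart from this underestimate of where the real work lies, the route is the paper's route.
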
 

\subsection{Discussion.}
\label{ssec:discussion}
To our knowledge,
 we established the first result~\cite{AchleitnerKuehn1}
 on existence, uniqueness (up to translations) and stability of traveling wave solutions of~\eqref{RD}
 with Riesz-Feller operators~$\RieszFeller$ for $1<\alpha< 2$ and $|\theta| \leq \min\{\alpha,2-\alpha\}$
 and bistable functions~$f$ satisfying~\eqref{As:f:bistable}. The technical details of the proof are 
contained in~\cite{AchleitnerKuehn1}, whereas in this paper we give a concise overview of the proof strategy
and visualize the results also numerically. 

To prove Theorem~\ref{thm:main}, 
 we follow - up to some modifications - the approach of Chen~\cite{Chen:1997}.
His approach relies on the comparison principle
 and the construction of sub- and supersolutions for any given traveling wave solution.
It allows to cover all bistable functions~$f$ satisfying~\eqref{As:f:bistable} regardless of the balance of the potential
 and all Riesz-Feller operators~$\RieszFeller$ for $1<\alpha< 2$ regardless of $|\theta| \leq \min\{\alpha,2-\alpha\}$.

Next, we quickly review different methods to study the traveling wave problem
 of a reaction-diffusion equation.
In case of a classical reaction-local diffusion equation~\eqref{eq:RD:local},
 the existence of traveling wave solutions can be studied via phase-plane analysis~\cite{Aronson+Weinberger:1974, Fife+McLeod:1977}.
This method has no obvious generalization to our traveling wave problem for~\eqref{RD},
 since its traveling wave equation is an integro-differential equation.

The variational approach has been focused - so far - on symmetric diffusion operators such as fractional Laplacians 
 and on balanced potentials,
 hence covering only stationary traveling waves~\cite{Cabre+SolaMorales:2005, Cabre+Sire:2010, Cabre+Sire:2011, Palatucci+etal:2013}. The homotopy to a simpler traveling wave problem has been used to prove 
 the existence of traveling wave solutions in case of~\eqref{eq:RD:Bates},
 and~\eqref{eq:RD:Zanette} with unbalanced potential~\cite{Gui:2012}.

Chmaj~\cite{Chmaj:2013} also considers the traveling wave problem for~\eqref{eq:RD:Zanette} with general bistable functions~$f$.
He approximates a given fractional Laplacian by a family of operators $J_\epsilon \ast u - (\int J_\epsilon)u$
 such that $\lim_{\epsilon\to 0} J_\epsilon \ast u - (\int J_\epsilon) u = \Riesz u$ in an appropriate sense.
This allows him to obtain a traveling wave solution of~\eqref{eq:RD:Zanette} with general bistable function~$f$
 as the limit of the traveling wave solutions~$u_\epsilon$ of~\eqref{eq:RD:Bates} associated to $(J_\epsilon)_{\epsilon\geq 0}$. It might be possible to modify Chmaj's approach
 to study also our reaction-diffusion equation~\eqref{RD} with Riesz-Feller operators.
This would give an alternative existence proof of a traveling wave solutions.

However, Chen's approach allows to
 establish uniqueness (up to translations) and stability of traveling wave solutions as well. 
It remains an open problem to extend Chen's approach, if this is possible,
 to the general case of Riesz-Feller operators with $0<\alpha\leq 1$ and $\abs{\theta} \leq \min\{\alpha,2-\alpha\}$.

\subsection{Outline.}
Our article is structured as follows.
In Section~\ref{sec:TWP},
 we give a non-technical review of our analytical results in a companion article~\cite{AchleitnerKuehn1}.
We introduce the Riesz-Feller operators as Fourier multiplier operators on Schwartz functions,
 and extend the Riesz-Feller operators in form of singular integrals to functions in $C^2_b(\R)$.
The Riesz-Feller operators $\RieszFeller$ generate a convolution semigroup 
 which we deduce from the theory of L\'{e}vy processes.

Then we present the analysis of the the Cauchy problem for~\eqref{RD}
 with initial datum $u_0\in C_b(\R)$ such that $0\leq u_0\leq 1$.
The proof follows a standard approach, 
 to consider the Cauchy problem in its mild formulation
 and to prove the existence of a mild solution.
The Cauchy problem generates a nonlinear semigroup  
 which allows to prove uniform $C^k_b$ estimates via a bootstrap argument
 and to conclude that mild solutions are also classical solutions.

A comparison principle is essential to prove our result on the existence, uniqueness and stability of traveling wave solutions 
 and to allow for a larger class of admissible functions $f$ in the result for the Cauchy problem.

Finally, we consider the traveling wave problem for~\eqref{RD}.
In~\cite{AchleitnerKuehn1} we consider a general approach by Chen~\cite{Chen:1997}.
There we study his necessary assumptions and notice that some estimates are not of the required form.
However Chen's approach can be extended, which we prove in~\cite[Appendices~A--C]{AchleitnerKuehn1}. 
We sketch the proof of Theorem~\ref{thm:main} in Section~\ref{sec:TWP},
 and refer to~\cite[Subsection 4.2]{AchleitnerKuehn1} for more details.

In Section~\ref{sec:numerics},
 we review numerical methods for reaction-diffusion equations with fractional Laplacian
 and discuss the (im-)possibility of extensions to our reaction-diffusion equations with Riesz-Feller operators.
Then we present a direct method using integral operator discretization based on quadrature in combination with projection boundary conditions. Furthermore, we visualize the analytical results from Section~\ref{sec:TWP} and outline several challenges for the numerical analysis of asymmetric Riesz-Feller operators.

\section{Traveling wave solutions.}
\label{sec:TWP}
\label{sec:CP}
A Riesz-Feller operator of order $\alpha$ and skewness $\theta$ can be defined as a Fourier multiplier operator,
\begin{equation} \label{eq:RF:Fourier}
 \Fourier [\RieszFeller f](\xi) = \psi^\alpha_\theta (\xi) \Fourier [f] (\xi) \,,\qquad  \xi\in\R \,,
\end{equation}
with symbol
\begin{equation} \label{eq:RF:symbol}
  \psi^\alpha_\theta(\xi) = -|\xi|^\alpha \exp\left[i(\sgn(\xi)) \theta \tfrac{\pi}{2}\right] \,,
\end{equation}
for some $0<\alpha\leq 2$ and $|\theta| \leq \min\{\alpha, 2-\alpha\}$.
The symbol $\psi^\alpha_\theta (\xi)$ is the logarithm of the characteristic function
 of a L\'{e}vy strictly stable probability density with index of stability~$\alpha$ and asymmetry parameter~$\theta$
 according to Feller's parameterization \cite{Feller2, GorenfloMainardi}.

\begin{remark}
We follow the convention in probability theory
 and define the Fourier transform of $f$ in the Schwartz space $\cS(\R)$ as 
\begin{align*}
  \cF [f](\xi) &:= \integral{\R}{ e^{+i \xi x} f(x) }{x} \,, \qquad \xi\in\R \,,
\intertext{and the inverse Fourier transform as}
  \cF^{-1} [f](x) &:= \tfrac{1}{2\pi} \integral{\R}{ e^{-i \xi x} f(\xi) }{\xi} \,, \qquad x\in\R \,.
\end{align*}
Moreover, $\Fourier$ and $\FourierInv$ will denote also their respective extensions to $L^2(\R)$.
\end{remark}

To analyze the Cauchy problem for the reaction diffusion equation~\eqref{RD}
 we need to investigate the linear space-fractional diffusion equation
\begin{equation} \label{eq:linearRF}
 \diff{u}{t}(x,t) = \RieszFeller [u(\cdot ,t)](x) \Xx{for} (x,t)\in\R\times (0,\I)\,,
\end{equation}
$0<\alpha\leq 2$ and $\abs{\theta} \leq \min\{\alpha,2-\alpha\}$.
A formal Fourier transform of the associated Cauchy problem yields
\[ 
\diff{}{t} \Fourier[u](\xi,t) = \psi^\alpha_\theta(\xi) \Fourier[u](\xi,t)\,, 
  \qquad \Fourier[u](\xi,0) = \Fourier[u_0](\xi)\,, 
\] 
 which has a solution $\Fourier[u](\xi,t)=e^{t\psi_\theta^\alpha(\xi)}\Fourier[u_0](\xi)$. 
Hence, a formal solution of the Cauchy problem is given by 
\begin{equation} \label{SG}
 u(x,t) = (\Green(\cdot ,t) \ast u_0)(x)
\end{equation}
with kernel (or Green's function) $\Green(x,t): = \mathcal{F}^{-1} \left[\exp( t \psi^\alpha_\theta(\cdot))\right] (x)$.

Due to Theorem~\cite[Theorem 14.19]{Sato:1999}, 
 the function $e^{t\psi_\theta^\alpha(\xi)}$ is the characteristic function 
 of a random variable with L\'{e}vy strictly $\alpha$-stable distribution.
Thus $\Green$ is the scaled probability measure of a L\'{e}vy strictly $\alpha$-stable distribution.
In case of $(\alpha,\theta)\in\{(0,0),(1,1),(1,-1)\}$, 
 the probability measure $\Green$ is a delta distribution 
 \[ G^0_0(x,t) = \delta_x\,, \qquad G^1_1(x,t) = \delta_{x+t}\,, \qquad G^1_{-1}(x,t) = \delta_{x-t} \]
 and called trivial~\cite[Definition 13.6]{Sato:1999}.
In all other (non-trivial) cases,
 the probability measure $\Green$ is absolutely continuous with respect to the Lebesgue measure
 and has a continuous probability density~\cite[Proposition 28.1]{Sato:1999}, which we will denote again by $\Green$.
For every infinitely divisible distribution $\mu$ on $\R^d$, such as $\Green$,
 there exists an associated L\'evy process $(X_t)_{t\geq 0}$.
In particular, every L\'evy process exhibits an associated strongly continuous semigroup on $C_0(\R^d)$,
 see also~\cite[Theorem 31.5]{Sato:1999}.

The infinitesimal generator of our L\'evy process has the following representation,
 which allows to extend the Riesz-Feller operator to $C^2_b(\R)$-functions. 
\begin{theorem} \label{thm:RieszFeller:extension}
If $0<\alpha<1$ or $1<\alpha<2$ and $|\theta|\leq \min\{\alpha,2-\alpha\}$,
 then for all $f\in\SchwartzTF(\R)$ and $x\in\R$ 
\begin{multline} \label{eq:RieszFeller1}
\RieszFeller f(x) = \tfrac{c_1 - c_2}{1-\alpha} f'(x) +
  c_1 \integrall{0}{\infty}{ \tfrac{f(x+\xi)-f(x)-f'(x)\,\xi {\bf 1}_{(-1,1)}(\xi)}{\xi^{1+\alpha}} }{\xi} \\
  + c_2 \integrall{0}{\infty}{ \tfrac{f(x-\xi)-f(x)+f'(x)\,\xi {\bf 1}_{(-1,1)}(\xi)}{\xi^{1+\alpha}} }{\xi}
\end{multline}
where ${\bf 1}_{(-1,1)}(\cdot)$ is an indicator function 
and some constants $c_1, c_2 \geq 0$ with $c_1+c_2 >0$.
\end{theorem}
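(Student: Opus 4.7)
The plan is to realize $\RieszFeller$ as the infinitesimal generator of the L\'evy semigroup associated with the strictly $\alpha$-stable distribution whose characteristic function is $\exp(t\psi_\theta^\alpha)$, and then read off the singular integral formula from the L\'evy--Khintchine representation of the generator. As noted in the excerpt, Sato's book provides the semigroup structure on $C_0(\R)$, and for $f\in\SchwartzTF(\R)$ the generator of any L\'evy process admits the classical form
\[
Lf(x) = bf'(x) + \tfrac{\sigma^2}{2} f''(x) + \integral{\R\setminus\{0\}}{\bigl(f(x+y)-f(x)-f'(x)\,y\,{\bf 1}_{(-1,1)}(y)\bigr)}{\nu(y)}\,,
\]
with a L\'evy triplet $(b,\sigma^2,\nu)$ uniquely determined by the characteristic exponent $\psi_\theta^\alpha$. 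The task is therefore to compute this triplet.

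The first step is to identify $\sigma$ and the shape of $\nu$. Since $\alpha<2$, the distribution is stable but non-Gaussian, so $\sigma=0$. The scaling relation $\psi_\theta^\alpha(\lambda\xi)=\lambda^\alpha\psi_\theta^\alpha(\xi)$ for $\lambda>0$ forces the L\'evy measure to be homogeneous of degree $-1-\alpha$, i.e.\ of the form $\nu(dy) = c_1\,y^{-1-\alpha}{\bf 1}_{y>0}\,dy + c_2\,|y|^{-1-\alpha}{\bf 1}_{y<0}\,dy$ for some $c_1,c_2\geq 0$. Splitting the integral over $\R\setminus\{0\}$ into positive and negative halves then produces precisely the two singular-integral pieces appearing in~\eqref{eq:RieszFeller1}.

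Next, the constants $c_1,c_2$ are pinned down by matching Fourier symbols. The key computation is the Mellin-type integral
\[
\integrall{0}{\infty}{\bigl(e^{i\xi y}-1-i\xi y\,{\bf 1}_{(0,1)}(y)\bigr)}{y^{-1-\alpha}\,dy}\,,
\]
which evaluates (by contour deformation, or by analytic continuation from $\alpha\in(0,1)$) to a multiple of $|\xi|^\alpha e^{-i\sgn(\xi)\alpha\pi/2}$ plus a purely imaginary linear remainder; the analogous formula holds on the negative half-line. Adding the two contributions, weighting by $c_1,c_2$, and equating the result with $\psi_\theta^\alpha(\xi)=-|\xi|^\alpha\exp[i\sgn(\xi)\theta\pi/2]$ yields explicit trigonometric expressions for $c_1,c_2$ in terms of $\alpha,\theta$. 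The admissibility condition $|\theta|\leq\min\{\alpha,2-\alpha\}$ is exactly what guarantees $c_1,c_2\geq 0$ with $c_1+c_2>0$. The drift $b=(c_1-c_2)/(1-\alpha)$ then emerges because one can equivalently express $L$ without the cutoff, using $\int(f(x+y)-f(x))\,\nu(dy)$ for $\alpha<1$ and $\int(f(x+y)-f(x)-f'(x)y)\,\nu(dy)$ for $\alpha>1$; converting between the cutoff and no-cutoff forms produces the constant
\[
b = \integral{|y|<1}{y}{\nu(y)} \quad(\alpha<1)\,, \qquad b = -\integral{|y|\geq 1}{y}{\nu(y)} \quad(\alpha>1)\,,
\]
each of which a direct computation shows equals $(c_1-c_2)/(1-\alpha)$.

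The main obstacle is the bookkeeping in the symbol-matching step: one must carefully evaluate the two Mellin integrals, separate out the imaginary linear remainder generated by the cutoff at $y=1$, and verify that the recombination reproduces $\psi_\theta^\alpha$ exactly in Feller's parameterization rather than some equivalent but differently normalized form. Once this identification is done, the formula~\eqref{eq:RieszFeller1} follows directly from the L\'evy--Khintchine representation applied to Schwartz functions, and the integrals converge thanks to the second-order Taylor cancellation near $y=0$ combined with boundedness of $f$ near infinity. The excluded case $\alpha=1$ is seen to be genuinely singular: the drift formula $(c_1-c_2)/(1-\alpha)$ blows up, and a logarithmic correction would be needed for a separate statement there.
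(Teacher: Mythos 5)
Your proposal follows essentially the same route as the paper: the result is obtained by identifying $\RieszFeller$ with the generator of the L\'evy semigroup attached to the strictly $\alpha$-stable law with characteristic exponent $\psi_\theta^\alpha$ and invoking the L\'evy--Khintchine representation of that generator (the paper simply cites \cite[Theorem~31.7]{Sato:1999} for this). Your additional details --- $\sigma=0$, the $|y|^{-1-\alpha}$ homogeneity of the L\'evy measure, the symbol-matching that yields $c_1,c_2\geq 0$ under $|\theta|\leq\min\{\alpha,2-\alpha\}$, and the drift $(c_1-c_2)/(1-\alpha)$ from converting between cutoff and cutoff-free forms --- are correct and consistent with the constants quoted later in Section~\ref{ssec:ours}.
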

\begin{proof}
The result follows from~\cite[Theorem 31.7]{Sato:1999}
 see also~\cite[Theorem~2.4]{AchleitnerKuehn1}.
\end{proof}

In the analysis of the traveling wave problem, 
 we are mostly interested in the evolution of initial data in $C_b$.
Therefore, it is important to notice the following proposition.
\begin{proposition}[{\cite[Corollary 2.10]{AchleitnerKuehn1}}] \label{prop:Cb:semigroup}
For $1<\alpha< 2$ and $\abs{\theta} \leq \min\{\alpha,2-\alpha\}$,
 the Riesz-Feller operator~$\RieszFeller$ generates a convolution semigroup 
 $S_t: C_b(\R) \to C_b(\R)$, $u_0 \mapsto S_t u_0 = \Green(\cdot ,t)\ast u_0$,
 with kernel $\Green(x,t)$.
Moreover, the convolution semigroup with $u(x,t):=S_t u_0$ satisfies
\begin{enumerate}
\item $u\in C^\infty(\R\times(t_0,\infty))$ for all $t_0>0$;
\item $\frac{\partial u}{\partial t} = \RieszFeller u$ for all $(x,t)\in\R\times (t_0,\I)$ and any $t_0>0$;
\item If $u_0\in C_b(\R)$ then $u\in C_b(\R\times[0,T])$ for any $T>0$.
\end{enumerate}
\end{proposition}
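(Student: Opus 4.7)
The plan is to build the semigroup directly from the Green's function $\Green(\cdot,t)$ whose existence and regularity already follows from the Lévy-process interpretation mentioned in the excerpt. First I would verify well-definedness: for $1<\alpha<2$ and $|\theta|\leq 2-\alpha$ the case is non-trivial in the sense of Sato, so $\Green(\cdot,t)$ is a probability density in $L^1(\R)$. Hence $S_t u_0 := \Green(\cdot,t)\ast u_0$ is well-defined on $C_b(\R)$ and satisfies $\|S_t u_0\|_\infty \leq \|u_0\|_\infty$, and standard continuity of convolution of an $L^1$ kernel with a bounded continuous function gives $S_t u_0 \in C_b(\R)$. The semigroup property $S_{t+s}=S_t S_s$ follows from $\Green(\cdot,t+s)=\Green(\cdot,t)\ast\Green(\cdot,s)$, which in turn follows by Fourier inversion from the multiplicative identity $\euler^{(t+s)\psi^\alpha_\theta(\xi)}=\euler^{t\psi^\alpha_\theta(\xi)}\cdot\euler^{s\psi^\alpha_\theta(\xi)}$ built into \eqref{eq:RF:symbol}.

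Next I would establish the smoothness claim (i). The key observation is that for $1<\alpha<2$ the admissible range forces $|\theta|\leq 2-\alpha<1$, so $\cos(\theta\pi/2)>0$ and
\[
\bigl|\euler^{t\psi^\alpha_\theta(\xi)}\bigr| = \exp\!\bigl(-t|\xi|^\alpha\cos(\theta\pi/2)\bigr),
\]
which decays super-polynomially in $\xi$ for every fixed $t>0$. Therefore $\Green(\cdot,t)\in\SchwartzTF(\R)$ and in particular $\partial_x^k\Green(\cdot,t)$ and $\partial_t^j\Green(\cdot,t)$ lie in $L^1(\R)$ for all $k,j$ and all $t\geq t_0>0$, with bounds locally uniform in $t$. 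Differentiation under the convolution integral then yields $S_tu_0\in C^\infty(\R\times(t_0,\infty))$ together with joint bounds.

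For the PDE claim (ii), having established that $u(\cdot,t)\in C^2_b(\R)$ for $t>0$, the singular-integral representation of $\RieszFeller$ from Theorem~\ref{thm:RieszFeller:extension} is applicable to $u(\cdot,t)$. On the Fourier side, $\partial_t\Fourier[u](\xi,t)=\psi^\alpha_\theta(\xi)\Fourier[u](\xi,t)$, and since both $\partial_t u(\cdot,t)$ and $\RieszFeller u(\cdot,t)$ are continuous and bounded with matching Fourier transforms (in an appropriate tempered distribution sense), they coincide pointwise. The semigroup identity $S_{t+h}u_0=S_h(S_tu_0)$ turns the verification into computing $\Diff{}{t}S_t u_0\big|_{t=t_0}$ for a smooth, bounded initial datum $S_{t_0/2}u_0$, where the computation is standard.

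The main obstacle, and the step I would be most careful with, is (iii): joint continuity of $u$ down to $t=0$ in the $C_b$ topology. A generic $u_0\in C_b(\R)$ need not be uniformly continuous, so one cannot expect $\|S_tu_0-u_0\|_\infty\to 0$; instead one must prove joint continuity at points $(x_0,0)$. My approach would be to estimate
\[
|u(x,t)-u_0(x_0)| \leq \integral{|y|\leq\delta}{\Green(y,t)\,|u_0(x-y)-u_0(x_0)|}{y} + 2\|u_0\|_\infty\!\!\integral{|y|>\delta}{\Green(y,t)}{y},
\]
choose $\delta$ small using continuity of $u_0$ at $x_0$, and then use the scaling/tightness of the $\alpha$-stable family to make the tail integral small as $t\to 0^+$. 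Combined with the already established continuity for $t>0$, this gives $u\in C_b(\R\times[0,T])$.
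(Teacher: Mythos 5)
Your strategy is sound and genuinely different from the paper's: the paper obtains the semigroup and its properties from L\'evy-process theory (Sato's results on strictly $\alpha$-stable distributions and their transition semigroups on $C_0(\R)$) combined with Schilling's symbol criterion for conservative $C_b$-Feller semigroups, delegating the details to the companion article, whereas you construct everything directly from the convolution kernel by Fourier analysis. Your handling of the semigroup property, of part (ii), and especially of part (iii) --- where you correctly recognize that $\norm{S_tu_0-u_0}_{L^\infty}\to0$ fails for general, non-uniformly-continuous $u_0\in C_b(\R)$ and replace it by joint continuity at points $(x_0,0)$, using the self-similarity of the $\alpha$-stable kernel to make $\integral{\{|y|>\delta\}}{\Green(y,t)}{y}$ vanish as $t\to0^+$ --- is exactly what is needed, and is more explicit than the paper's citation-based treatment.

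There is, however, one genuinely false step in your argument for (i): the claim that $\Green(\cdot,t)\in\SchwartzTF(\R)$. Rapid decay of $\exp(t\psi^\alpha_\theta(\xi))$ in $\xi$ yields smoothness of $\Green(\cdot,t)$ in $x$, but says nothing about its decay in $x$; since $\psi^\alpha_\theta$ fails to be smooth at $\xi=0$ (the factor $|\xi|^\alpha$ is not $C^2$ there for $1<\alpha<2$), the density is never Schwartz --- every non-Gaussian stable density has heavy tails of order $|x|^{-1-\alpha}$. Hence the inference ``$\Green(\cdot,t)\in\SchwartzTF$, therefore $\partial_x^k\Green(\cdot,t),\partial_t^j\Green(\cdot,t)\in L^1(\R)$'' does not stand as written. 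The conclusion you actually need can be rescued: for instance, $(1+x^2)\,\partial_x^k\Green(x,t)$ is bounded because its Fourier transform $(1-\partial_\xi^2)\bigl[(-i\xi)^k e^{t\psi^\alpha_\theta(\xi)}\bigr]$ lies in $L^1(\R)$ --- the most singular contribution near $\xi=0$ behaves like $|\xi|^{k+\alpha-2}$, which is integrable since $\alpha>1$ --- and this gives $\partial_x^k\Green(\cdot,t)\in L^1(\R)$ with bounds locally uniform in $t\geq t_0>0$; an analogous argument handles the $t$-derivatives. With that repair, differentiation under the convolution integral is justified and your proposal becomes a correct, self-contained alternative proof of the proposition.
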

This result states that
 Riesz-Feller operators $\RieszFeller$ for $0<\alpha\leq 2$ and $\abs{\theta}\leq \min\{\alpha,2-\alpha\}$
 generate conservative $C_b$-Feller semigroups. 
This can be deduced from a criterion on the symbol of Fourier multiplier operators in~\cite{Schilling:1998}.

It is important to notice that $S_t: C_b(\R) \to C_b(\R)$ is not a strongly continuous semigroup.
Thus the $C^2_b$-extension of $\RieszFeller$ are not the infinitesimal generators
 of the $C_b$-extension of the strongly continuous semigroup $(S_t)_{t\geq 0}$ on $C_0(\R)$
 in the usual sense.   

\subsection{Cauchy problem.}
We consider the Cauchy problem
\begin{equation} \label{CP:RD}
\begin{cases}
  \diff{u}{t} = \RieszFeller u + f(u) 	& \xx{for} (x,t)\in\R\times (0,\infty) \,, \\
  u(x,0) = u_0(x) 			& \xx{for} x\in\R \,,
\end{cases}
\end{equation}
for $1<\alpha\leq 2$, $|\theta| \leq \min\{\alpha, 2-\alpha\}$ and $f\in C^\infty(\R)$ satisfying~\eqref{As:f:bistable}.
We follow a standard approach, 
 and consider the Cauchy problem in its mild formulation
 to prove the existence of a mild solution.
The Cauchy problem generates a nonlinear semigroup  
 which allows to prove uniform $C^k_b$ estimates via a bootstrap argument
 and to conclude that mild solutions are also classical solutions.
 \begin{theorem}[{\cite[Theorem 3.3]{AchleitnerKuehn1}}] \label{thm:CP}
 Suppose $1<\alpha\leq 2$, $|\theta| \leq \min\{\alpha,2-\alpha\}$ and $f\in C^\infty(\R)$ satisfies~\eqref{As:f:bistable}.
 The Cauchy problem~\eqref{RD} with initial condition $u(\cdot ,0)=u_0\in C_b(\R)$ and $0\leq u_0\leq 1$
  has a solution $u(x,t)$ in the following sense: for all $T>0$
  \begin{enumerate} 
   \item 
     $u\in C_b(\R\times(0,T))$ and 
     $u\in C^\infty_b(\R\times(t_0,T))$ for all $t_0\in(0,T)$;
   \item 
     $u$ satisfies~\eqref{RD} on $\R\times(0,T)$;
   \item 
     If $u_0\in C_b(\R)$ then $u(\cdot,t)\to u_0$ uniformly as $t\to 0$;
   \item \label{As:DI:xx}
 	  $0\leq u(x,t) \leq 1$ for all $(x,t)\in\R\times(0,\infty)$;
   \item $\forall k\in\N$ $\forall t_0>0$ $\exists C>0$ such that $\norm{u(\cdot ,t)}_{C^k_b(\R)}\leq C$ $\forall 0<t_0<t$. 
  \end{enumerate}
 \end{theorem}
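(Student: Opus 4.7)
The plan is to follow the classical Duhamel/Picard scheme, exploiting the convolution semigroup $\SG$ furnished by Proposition~\ref{prop:Cb:semigroup}. Writing~\eqref{CP:RD} in mild form,
\[ u(\cdot,t) = S_t u_0 + \integrall{0}{t}{S_{t-s} f(u(\cdot,s))}{s}\,, \]
I would first replace $f$ by a $C^\infty$ modification $\tilde f$ that agrees with $f$ on $[0,1]$, vanishes outside a larger bounded set, and is globally Lipschitz on $\R$. Since $\Green(\cdot,t)$ is a probability density, each $S_t$ is a contraction on $C_b(\R)$, and a short-time Banach fixed point on $C_b(\R\times[0,T_0])$ produces a unique mild solution, extended to arbitrary $T>0$ by iteration. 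The uniform convergence $u(\cdot,t)\to u_0$ as $t\to 0$ (item~3) follows from item~3 of Proposition~\ref{prop:Cb:semigroup} applied to $S_t u_0$, together with the $O(t)$ estimate on the Duhamel integral.

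The next step is to recover the invariance $0\leq u\leq 1$ (item~4) by the comparison principle. After normalization $\um=0$, $\up=1$, the constants $0$ and $1$ are stationary solutions of~\eqref{RD}, and comparing $u$ with these constants yields the desired two-sided bound. Once this is known, the values of $\tilde f$ outside $[0,1]$ play no role, so $\tilde f$ may be replaced by $f$ in the mild equation; combined with the smoothness obtained in the next step this gives item~2.

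The last task is the bootstrap for smoothness and the uniform $C^k_b$-estimates (items~1 and~5). For $t>t_0>0$ I would use the semigroup identity
\[ u(\cdot,t) = S_{t-t_0} u(\cdot,t_0) + \integrall{t_0}{t}{S_{t-s} f(u(\cdot,s))}{s}\,. \]
Item~1 of Proposition~\ref{prop:Cb:semigroup} gives smoothness and $C^k_b$ bounds of the linear part depending only on $\norm{u(\cdot,t_0)}_\infty$ and $t-t_0$. For the Duhamel term, self-similarity of the $\alpha$-stable density $\Green(\cdot,\tau)$ yields the kernel derivative bound $\norm{\partial_x \Green(\cdot,\tau)}_{L^1(\R)} \leq C\, \tau^{-1/\alpha}$. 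Since $\alpha>1$, the factor $(t-s)^{-1/\alpha}$ is integrable on $(t_0,t)$, so differentiating once in $x$ inside the integral yields $u\in C^1_b$ on $\R\times[t_0,T]$. Higher derivatives are obtained by iterating: after $k$ steps one distributes the $k$-th derivative between the kernel and $f(u(\cdot,s))$, keeping at most one derivative on $\Green$ at a time so that $(t-s)^{-1/\alpha}$ remains integrable. Time derivatives are then produced from the equation itself via the chain rule.

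The main obstacle is exactly this last step. Since $\SG$ is not strongly continuous on $C_b(\R)$ (as noted after Proposition~\ref{prop:Cb:semigroup}), the abstract analytic-semigroup regularity theory is not directly available, and the smoothing has to be done by hand using pointwise/$L^1$ bounds on the stable kernel $\Green$ and its derivatives. The assumption $\alpha>1$ is critical here: it is precisely what makes $(t-s)^{-1/\alpha}$ integrable and what permits the distribution-of-derivatives trick that drives the bootstrap; for $\alpha\leq 1$ this scheme breaks down, which is consistent with the range restriction in the theorem.
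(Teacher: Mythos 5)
Your proposal is correct and follows essentially the same route the paper announces for this theorem (which it only sketches, deferring details to the companion article): mild/Duhamel formulation with the convolution semigroup of Proposition~\ref{prop:Cb:semigroup}, a fixed-point argument for a Lipschitz-truncated nonlinearity, the comparison principle to recover the invariant interval $[0,1]$, and a bootstrap using $L^1$-bounds on derivatives of the stable kernel (integrable precisely because $\alpha>1$) to upgrade to classical smoothness and the uniform $C^k_b$ estimates. No substantive deviation or gap relative to the paper's stated strategy.
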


The following comparison principle is essential to prove our result on the existence, uniqueness and stability of traveling wave solutions 
 and to allow for a larger class of admissible functions $f$ in the result for the Cauchy problem.
\begin{lemma}[{\cite[Lemma 3.4]{AchleitnerKuehn1}}] \label{lem:comparison:Cb}
Assume $1<\alpha\leq 2$, $\abs{\theta} \leq \min\{\alpha,2-\alpha\}$, $T>0$
 and $u,v \in C_b(\R\times [0,T])\cap C^2_b(\R\times (t_0,T])$ for all $t_0\in(0,T)$ such that 
 \[
   \diff{u}{t} \leq \RieszFeller u + f(u) \XX{and} \diff{v}{t} \geq \RieszFeller v + f(v) \Xx{in} \R\times (0,T]\,. 
 \]
\begin{enumerate} 
\item If $v(\cdot ,0)\geq u(\cdot ,0)$ then $v(x,t)\geq u(x,t)$ for all $(x,t)\in\R\times (0,T]$.
\item If $v(\cdot ,0) \gneqq u(\cdot ,0)$ then $v(x,t) > u(x,t)$ for all $(x,t)\in\R\times (0,T]$.
\item Moreover,
 there exists a positive continuous function 
 \[ \eta:[0,\infty)\times(0,\infty)\to(0,\infty)\,, \quad (m,t)\mapsto\eta(m,t)\,, \]
 such that if $v(\cdot ,0)\geq u(\cdot ,0)$ then for all $(x,t)\in\R\times(0,T)$
 \[ v(x,t)-u(x,t) \geq \eta(\abs{x},t) \integrall{0}{1}{ v(y,0)-u(y,0) }{y} \,. \]
\end{enumerate}
\end{lemma}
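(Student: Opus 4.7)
The plan is to reduce the nonlinear comparison to a linear nonlocal minimum principle and then to exploit that the Riesz-Feller semigroup $\SG$ from Proposition~\ref{prop:Cb:semigroup} is positivity-preserving with a strictly positive, continuous density $\Green(\cdot,t)$. First I would set $w := v - u$. Since $u,v$ take values in a bounded set and $f$ is $C^1$ there, the mean-value theorem gives $f(v)-f(u) = c(x,t)\,w$ with $c(x,t) := \int_0^1 f'(u + s w)\,\d[s]$ and $\|c\|_\infty \leq L$ for some $L \geq 0$. Subtracting the two differential inequalities, $w$ satisfies the linear nonlocal parabolic inequality
\[
 w_t - \RieszFeller w - c\,w \;\geq\; 0 \quad \text{on } \R \times (0,T], \qquad w(\cdot,0) \geq 0,
\]
classically on each strip $\R \times [t_0, T]$.

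For the weak comparison~(i), introduce the gauge $\tilde w := e^{-Kt} w$ with $K := L + 1$; then
\[
 \tilde w_t - \RieszFeller \tilde w - (c - K)\,\tilde w \;\geq\; 0, \qquad c - K \;\leq\; -1.
\]
Assuming for contradiction $\mu := \inf_{\R\times[0,T]} \tilde w < 0$, I would use translation invariance of $\RieszFeller$ together with the uniform $C^2_b$-regularity of $u,v$ on strips $\R \times [t_0, T]$ to extract, via Arzel\`a--Ascoli applied to spatial translates $\tilde w(\cdot + x_n, \cdot)$ of a minimizing sequence, a limit that still satisfies the same differential inequality and \emph{attains} $\mu$ at some interior point $(x_*, t_*)$ with $t_* > 0$. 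At this attained minimum $\partial_t \tilde w(x_*, t_*) \leq 0$, and since $\partial_x \tilde w(x_*, t_*) = 0$ and $\tilde w \geq \mu$ everywhere, the singular integral representation~\eqref{eq:RieszFeller1} forces $\RieszFeller \tilde w(x_*, t_*) \geq 0$. The differential inequality then yields the contradiction
\[
 0 \;\geq\; \partial_t \tilde w(x_*, t_*) - \RieszFeller \tilde w(x_*, t_*) \;\geq\; (c-K)\,\mu \;\geq\; |\mu| \;>\; 0,
\]
so $w = e^{Kt}\tilde w \geq 0$.

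For~(ii) and~(iii), once $w \geq 0$ is known, the inequality $w_t - \RieszFeller w \geq c\,w \geq -L w$ implies $(e^{Lt} w)_t \geq \RieszFeller(e^{Lt} w)$, so $e^{Lt} w$ is a supersolution of the purely linear equation $\partial_t = \RieszFeller$ with initial datum $w(\cdot, 0)$. Applying~(i) to this linear inequality against the exact solution $\Green(\cdot, t) \ast w(\cdot, 0)$ yields
\[
 w(x, t) \;\geq\; e^{-Lt}\,\bigl(\Green(\cdot, t) \ast w(\cdot, 0)\bigr)(x) \qquad \forall (x, t) \in \R \times (0, T].
\]
Because $\Green(\cdot, t)$ is the strictly positive continuous density of a non-trivial L\'{e}vy $\alpha$-stable law for $1 < \alpha \leq 2$ and $|\theta| \leq \min\{\alpha, 2-\alpha\}$, convolving with any nontrivial $w(\cdot, 0) \geq 0$ produces a pointwise positive function, proving~(ii). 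Setting
\[
 \eta(m, t) := e^{-Lt} \min\Bigl\{ \inf_{y \in [0,1]} \Green(m-y, t),\ \inf_{y \in [0,1]} \Green(-m-y, t) \Bigr\},
\]
which is continuous and strictly positive on $[0, \infty) \times (0, \infty)$, gives~(iii) through $w(x,t) \geq \eta(|x|, t) \int_0^1 w(y, 0)\,\d[y]$.

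The main obstacle is the localization step inside~(i): on the unbounded strip $\R \times [0, T]$ the infimum of $\tilde w$ need not be attained, and classical parabolic barrier tricks (e.g.\ perturbing by $\epsilon |x|^2$) fail here because such polynomial weights are not in the domain of $\RieszFeller$. The translation-compactness argument, which rests on translation invariance of $\RieszFeller$ and the uniform $C^2_b$ estimates on strips $\R \times [t_0, T]$, is what lets the minimum principle go through in this nonlocal, unbounded setting.
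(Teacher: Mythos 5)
The paper states Lemma~\ref{lem:comparison:Cb} without proof, deferring entirely to the companion article, so there is no in-paper argument to compare against; I therefore assess your proposal on its own terms. Most of your architecture is sound: the linearization $f(v)-f(u)=c\,w$ with $\|c\|_\infty\leq L$, the gauge $e^{-Kt}$, the sign argument at a global spatial minimum (the drift term in \eqref{eq:RieszFeller1} vanishes and both integrands are nonnegative, so $\RieszFeller\tilde w\geq 0$ there), and the derivation of (ii) and (iii) from (i) via $w(x,t)\geq e^{-Lt}\bigl(\Green(\cdot,t)\ast w(\cdot,0)\bigr)(x)$ combined with strict positivity and joint continuity of the non-trivial stable density $\Green$ are all correct.

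The genuine gap sits in the localization step of (i), and it is not the obstacle you flagged. The translation-compactness device repairs non-attainment in $x$, but it does not rule out that the minimizing sequence has $t_n\to 0$. In that regime you have neither a uniform $C^2_b$ bound (the hypotheses give $C^2_b$ only on $\R\times(t_0,T]$, with constants that may degenerate as $t_0\to 0$) nor uniform convergence $w(\cdot,t)\to w(\cdot,0)$ (membership in $C_b(\R\times[0,T])$ is only pointwise continuity at $t=0$; a bounded continuous function such as $-\min\{1,t|x|\}$ shows that $\inf_x$ can drop discontinuously at $t=0^+$). So the assertion that the limit of the translates attains $\mu$ at an interior point with $t_*>0$ is unjustified; what your argument actually yields is $\inf_{\R\times[t_0,T]}\tilde w\geq\min\{0,\inf_x\tilde w(x,t_0)\}$, which does not close as $t_0\to 0$. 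The natural repair, and the one consistent with the paper's framework (Proposition~\ref{prop:Cb:semigroup}), is to pass to the mild/Duhamel form: differentiate $s\mapsto S_{t-s}w(\cdot,s)$ on $[\epsilon,t]$, use positivity of $S_{t-s}$, and let $\epsilon\to 0$; because $\Green(\cdot,t-\epsilon)$ is an $L^1$ probability kernel, pointwise bounded convergence $w(\cdot,\epsilon)\to w(\cdot,0)$ already gives $S_{t-\epsilon}w(\epsilon)\to S_t w(0)$, and a Gronwall estimate on $\sup_x\max\{-w(x,t),0\}$ then proves (i) with no attainment or boundary-layer issue. The same inequality $w(t)\geq e^{-Kt}S_t w(0)$ delivers (ii) and (iii) exactly as in your last step.
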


\begin{proof}[Sketch of the proof of Theorem~\ref{thm:main}]
We present here a sketch of the proof of Theorem~\ref{thm:main}
 and refer to our article~\cite{AchleitnerKuehn1} for more details. 
To prove existence of traveling wave solutions satisfying~\eqref{As:TWS},
 we consider the Cauchy problem for~\eqref{RD} with some smooth initial datum $u_0\in C_b(\R)$ satisfying~\eqref{As:TWS}.
Due to Theorem~\ref{thm:CP} there exists a classical solution $u(x,t)$.
We consider a diverging sequence $\{t_j\}_{j\in\N}$ such that $\lim_{j\to\infty} t_j = \infty$
 and the associated sequence $\{ u(\cdot,t_j)\}_{j\in\N}$ in $C_b(\R)$.
Then, due to Arzela-Ascoli Theorem, 
 there exists a subsequence and a limiting function $\tilde u$ such that $\lim_{k\to\infty} u(\cdot,t_{j_k}) = \tilde u(\cdot)$.
The final and most important step is to verify that $\tilde u$ is a traveling wave solution of~\eqref{RD} satisfying~\eqref{As:TWS}.

To prove uniqueness (up to translations) of a traveling wave solution,
 sub- and super-solutions of~\eqref{RD} are constructed from any given traveling wave solution.
Assuming the existence of two traveling wave solutions,
 one traveling wave solution is bounded from below and from above 
 by suitable sub- and super-solutions associated to the other traveling wave solution, respectively.
The comparison principle in Lemma~\ref{lem:comparison:Cb} allows to show 
 that one traveling wave solution is a translated version of the other traveling wave solution.

To prove stability of a traveling wave solution,
 considering the Cauchy problem for~\eqref{RD} with initial datum $u_0$ satisfying~\eqref{As:stability},
 then the associated solution $v$ can be bounded from below and from above 
 by suitable sub- and super-solutions associated to the traveling wave solution, respectively.
The comparison principle and the evolution of sub- and super-solutions show
 that these bounds on the solution~$v$ get tighter 
 and allow to prove the exponential convergence to (a translated version of) the traveling wave solution. 

For more details see the proof of~\cite[Theorem 4.6]{AchleitnerKuehn1}.
\end{proof}

\section{Numerical methods.}
\label{sec:numerics}



In this section, we illustrate our results from Theorem~\ref{thm:main} and discuss numerical methods 
for \eqref{RD}. The case $\theta=0$ yields the fractional Laplacian 
$D^\alpha_0=-(-\Delta)^{\alpha/2}$ which has been discussed frequently from a numerical 
perspective in the literature. Hence, there is a notational convention to write \eqref{RD} 
for $\theta=0$ as
\benn
\frac{\partial u}{\partial t}+(-\Delta)^{\alpha/2}= f(u)\qquad \text{or} \qquad
\frac{\partial u}{\partial t}=-(-\Delta)^{\alpha/2}+ f(u).
\eenn
However, we shall adhere to the convention $D^\alpha_0$ as introduced previously.
First, we review some of the available numerical schemes for this
case. We restrict the computational domain from $x\in\R$ to $x\in[-b,b]=:\Omega$
for some (sufficiently large) $b>0$ and with Neumann or Dirichlet boundary conditions.
A numerical comparison of various methods for the case $D^\alpha_0$ has already been
carried out in \cite{SternEffenbergerFichtnerSchaefer,YangLiuTurner} so we shall focus our 
small survey in Sections \ref{ssec:Spectral}-\ref{ssec:mtransfer} on the difficulties in the 
numerical generalization from $\theta=0$ to $\theta\neq 0$ for space-fractional equations. 
Furthermore, we only cover spatial grid bases schemes and do not discuss stochastic particle 
methods.

The main novel results are our direct method using integral operator discretization in
combination with projection boundary conditions in Section \ref{ssec:ours} and the numerical 
results in Section \ref{ssec:numres} for \eqref{RD}. 

\subsection{Spectral methods.}
\label{ssec:Spectral}

One idea is to generalize \textit{spectral methods} to the fractional Laplacian 
case \cite{BuenoOrovioKayBurrage}. Let $\lambda_j$ denote the Laplacian eigenvalues
and $\phi_j$ the corresponding eigenfunctions for 
$D^2_0 \phi_l=\lambda_l\phi_l$ with $l\in \N_0=\N\cup\{0\}$. 
Consider $L^2(\Omega)$ then we may write $v\in L^2(\Omega)$ as a series expansion
\be
\label{eq:useries}
v=\sum_{l=0}^\I \hat{v}_l\phi_l,\qquad \hat{v}_l:=\langle v,\phi_l \rangle 
\ee
where $\langle \cdot,\cdot\rangle$ denotes the $L^2(\Omega)$ inner product. Fix some $\alpha$ 
with $1<\alpha\leq 2$ and consider
\be
H^{\alpha/2}(\Omega):=\left\{v\in L^2(\Omega):\sum_{l=0}^\I |\hat{v}_l|^2 
|\lambda_l|^{\alpha/2}<\I\right\}.
\ee
The spectral decomposition of the fractional
Laplacian implies \cite{BarriosColoradodePabloSanchez} that $-(-\lambda_l)^{\alpha/2}$ are 
eigenvalues with eigenfunctions $\phi_l$ for $D^\alpha_0$ and for any 
$v\in H^{\alpha/2}(\Omega)$ we have
\be
\label{eq:useries1}
D^\alpha_0v=-\sum_{l=0}^\I (-\lambda_l)^{\alpha/2}\hat{v}_l\phi_l.
\ee
As a remark, we note that all the minus signs on the right-hand side in \eqref{eq:useries1} 
disappear if we would write $(-\Delta)^{\alpha/2}u$ on the left-hand side instead and would let
$\lambda_l$ denote the eigenvalues of the negative Laplacian. It is suggested in 
\cite{BuenoOrovioKayBurrage} to apply a backward Euler-type time discretization on a mesh 
\be
\label{eq:temp_grid}
0=t_0<t_1<\cdots<t_m<t_{m+1}<\cdots<T
\ee
for \eqref{RD} where we set $t_{m+1}-t_m=:(\delta t)_m$. Denote by $u^{m}:=u(x,t_m)$ 
the solution at time $t_m$. For the time step $t_m$ to $t_{m+1}$ one may consider the 
semi-implicit backward Euler scheme
\be
\label{eq:Burrage1}
\frac{u^{m+1}-u^m}{(\delta t)_m}=D^\alpha_0 u^{m+1}+f(u^{m}).
\ee
Making the Fourier spectral ansatz
\benn
u(x,t)=\sum_{l=0}^\I \hat{u}_l(t)\phi_l(x)\approx \sum_{l=0}^L \hat{u}_l(t)\phi_l(x) 
\eenn
in \eqref{eq:Burrage1}, using the orthogonality of the basis functions $\phi_l$ and 
employing \eqref{eq:useries1} leads to the numerical method
\be
\label{eq:Burrage3}
\hat{u}_l^{m+1}=\frac{1}{1+(-\lambda_l)^{\alpha/2}(\delta t)_m}\left(\hat{u}_l^{m}+
(\delta t)_m \hat{f}_l(u^{m})\right)
\ee
where $\hat{f}_l$ is the $l$-th Fourier coefficient of $f$. In particular, the $L+1$ 
Fourier modes in \eqref{eq:Burrage3} are decoupled and relatively easy to solve for.
Further implementation details of \eqref{eq:Burrage3} can be found in 
\cite[Code 4,~p.10]{BuenoOrovioKayBurrage}. However, the generalization of
\eqref{eq:Burrage3} from the fractional Laplacian case $D^\alpha_0$ to the 
asymmetric case $D^\alpha_\theta$ with $\theta\neq 0$ is not straightforward. 
In fact, in the asymmetric case one generically obtains complex eigenvalues
and a continuous spectrum \cite{AlSaqabiBoyadjievLuchko}. This means that 
\eqref{eq:useries1} is no longer valid for $\theta\neq 0$. For another approach 
using transform/Fourier-type techniques we refer to \cite{SaxenaMathaiHaubold}. 

\subsection{Finite difference methods.}
\label{ssec:FDM}

A second possible approach to solve \eqref{RD} is to use a 
\textit{finite difference method} (FDM) \cite{MeerschaertTadjeran} 
combined with the Gr\"unwald-Letnikov representation of the space 
fractional derivative. Let us consider a spatial discretization of 
$\Omega=[-b,b]$ as follows
\be
\label{eq:space_grid}
-b=x_1<x_2<\cdots<x_N=b.
\ee
We still use the temporal discretization \eqref{eq:temp_grid}.
For $D^\alpha_0$ the Gr\"unwald-Letnikov representation of $D^\alpha_0$ is 
given by
\bea
(D^\alpha_0u)(x,t)&=&\lim_{N\ra \I}\frac{1}{h^\alpha_+}\sum_{r=0}^N
\frac{\Gamma(r-\alpha)}{\Gamma(-\alpha)\Gamma(r+1)}u(x-rh_+,t)\nonumber\\
&&+\lim_{N\ra \I}\frac{1}{h^\alpha_-}\sum_{r=0}^N
\frac{\Gamma(r-\alpha)}{\Gamma(-\alpha)\Gamma(r+1)}u(x+rh_+,t),
\eea
where $h_+=(x+b)/N$ and $h_-=(b-x)/N$. Let us assume for simplicity that
the spatial grid is equidistant and let $h:=2b/N$. Furthermore, we let 
\benn
u^{m}_n:=u(x_n,t_m).
\eenn
Then one possible finite-difference discretization of \eqref{RD} is 
given by \cite{MeerschaertTadjeran,SternEffenbergerFichtnerSchaefer}
\benn
\frac{u^{m+1}_n-u^{m}_n}{(\delta t)_m}=\frac{1}{h^\alpha}
\left[\sum_{r=0}^{n+1}g_r u^{m+1}_{n-r+1} + \sum_{r=0}^{N-n+1}g_r u^{m+1}_{n+r-1}\right]
+f(u^{m+1}_n),
\eenn
with $g_r:=\frac{\Gamma(r-\alpha)}{\Gamma(-\alpha)\Gamma(r+1)}$.
For a similar approach using the Gr\"unwald-Letnikov representation to obtain 
finite-difference schemes we also refer to 
\cite{ChenLiuTurnerAnh,GorenfloAbdelRehim,LiuAnhTurner,LiuZhuangAnhTurnerBurrage,
MeerschaertTadjeran1,SchererKallaTangHuang,TadjeranMeerschaertScheffler,ZhuangLiuAnhTurner}.
For even more details on finite-difference methods for space-fractional diffusion 
equations consider \cite{LanglandsHenry,ShenLiu,Yuste}. 
In some sense, our scheme in Section \ref{ssec:ours} has an analogous starting 
point. However, instead of the Gr\"unwald-Letnikov representation we use the integral
representation formula which we also employed in the existence-uniqueness-stability 
proof of Theorem~\ref{thm:main}; see also Section \ref{ssec:ours}.

\subsection{Finite element methods.}
\label{ssec:FEM}

Another quite natural possibility is to follow the classical 
\textit{finite element method} (FEM) variational 
approach. We follow \cite{ErvinHeuerRoop,JinLazarovPasciakZhou} 
in our exposition for the case $\theta=0$.
Let $X:=H^{\alpha/2}_0(\Omega)$ denote the usual fractional Sobolev space 
obtained as a closure of $C_0^\I(\Omega)$ in $H^{\alpha/2}(\Omega)$
and define
\be
\label{eq:form}
A(v,w):=-\langle D^{\alpha/2}_0v,D^{\alpha/2}_0w\rangle\,,
\ee
where representation~\eqref{eq:useries1} is used.
Then one may check that $A$ is coercive and continuous.
Consider the space $X_h$ of piecewise linear continuous functions in $X$ with compact support given by
\benn
X_h:=\{v\in C_0(\Omega):\text{$v$ is linear over $[x_n,x_{n+1}]$, $n=1,2,\ldots,N-1$}\}.
\eenn
Then we may define a discrete operator $A_h:X_h\ra X_h$ associated to $A$ via
\benn
\langle A_h v_h,w_h\rangle = A(v_h,w_h)\qquad \forall v_h,w_h\in X_h.
\eenn
A semi-discrete Galerkin FEM scheme for 
\eqref{RD} is to find $u_h=u_h(t)\in X_h$ such that
\be
\label{eq:Galerkin}
\left\langle \frac{\partial u_h}{\partial t}(t),v_h\right\rangle 
=\langle A_h u_h(t),v_h\rangle +\langle f(u_h(t)),v_h\rangle \qquad \forall v_h\in X\,,
\ee
and projected initial condition $\langle u_h(0),v_h\rangle=\langle u(0),v_h\rangle$.
Choosing a basis $\{\varphi_1,\varphi_2,\ldots,\varphi_N\}$ of $X_h$ we may
write
\benn
u_h(x,t)=\sum_{n=1}^N u_n(t)\varphi_n(x).
\eenn
One defines the usual mass matrix $M\in\R^{N\times N}$ and stiffness matrix 
$A\in\R^{N\times N}$ with entries
\be
\label{eq:mass_stiff_mat}
M_{nm}=\langle\varphi_n,\varphi_m \rangle,\qquad A_{nm}=A(\varphi_n,\varphi_m),\qquad m,n\in\{1,2,\ldots,N\}. 
\ee
This converts \eqref{eq:Galerkin} into the ODEs 
\be
\label{eq:FEM1}
M\frac{dU}{dt}=AU+f(U)
\ee
where $U=(u_1,\ldots,u_N)^T$ and 
$f(U)=(\langle f(u_h),\varphi_1\rangle,\ldots,\langle f(u_h),\varphi_N\rangle)^T$.
Then one may use a time-stepping scheme directly. For example, a backward Euler 
semi-implicit discretization yields
for $U^m:=U(t_m)$ the method
\be
\label{eq:FEM2}
(\text{Id}-(\delta t)_mM^{-1}A)U^{m+1}=U^{m}+(\delta t)_mM^{-1}f(U^{m}).
\ee
These considerations show that we can, at least formally, just follow the
classical FEM theory to derive numerical methods for equations involving $D^\alpha_0$. However, for 
the fractional Laplacian $D^\alpha_0$ the matrix entries for $A$ defined
in \eqref{eq:mass_stiff_mat} are not as easy to compute as for $D^2_0$. FEM techniques
also seem to generalize formally to the asymmetric case as coercivity and 
continuity hold for classes of fractional operators more general than 
$D^\alpha_0$ \cite[p.574-575]{ErvinHeuerRoop}. However, we are again
faced with the practical problem of computing (an approximation of) $A(\varphi_n,\varphi_m)$.
This observation is one reason which motivates the method presented in
the next section. For more details on FEM for space fractional equations we
refer to \cite{ErvinRoop,ErvinRoop1,FixRoof,Roop}.

\subsection{Matrix-transfer techniques.}
\label{ssec:mtransfer}

The symmetry of $D^2_0$ and the view of fractional powers $D^\alpha_0$
can be employed in conjunction with FEM or FDM discretizations for 
\eqref{RD}. Again, we consider the case $\theta=0$ following 
\cite{IlicLiuTurnerAnh,IlicLiuTurnerAnh1,BurrageHaleKay}. Let $A_\Delta\in\R^{N\times N}$ 
be the usual FEM stiffness matrix and $M_\Delta$ be the FEM mass matrix for $D^2_0$. 
One natural idea is to use a fractional power of the matrix 
$B_\Delta:=M_\Delta^{-1}A_\Delta$ in a numerical 
scheme to represent the fractional Laplacian. Suppose we can compute 
$(B_\Delta)^\alpha$ then a backward semi-implicit Euler-type time discretization, 
similar to \eqref{eq:FEM2}, leads to
\be
\label{eq:Burrage4}
(\textnormal{Id} -(\delta t)_m (B_\Delta)^\alpha)U^{m+1}=U^{m}+M^{-1}_\Delta f(U^{m}).
\ee  
To solve \eqref{eq:Burrage4} one has to also compute the function 
\be
\label{eq:matrix_func_special}
q(z)=\frac{1}{1-(\delta t)_m z^\alpha}
\ee
efficiently for matrices, which has been discussed in \cite{BurrageHaleKay}. 
However, we still have to define $B^\alpha_\Delta$. Standard theory implies that $A_\Delta,M_\Delta$ 
are real, symmetric matrices \cite{ErnGuermond}. Furthermore, $A_\Delta$ is non-negative definite
and $M_\Delta$ is positive definite. A direct calculation
shows that
\begin{align*}
(M_\Delta)^{1/2} B_\Delta (M_\Delta)^{-1/2}
 &= (M_\Delta)^{-1/2}  A_\Delta (M_\Delta)^{-1/2}.
\end{align*}
Therefore, $B_\Delta$ is similar to a real, symmetric matrix with well-defined
point spectrum $\sigma(B_\Delta)\subset \R$ and eigenvalues 
$\xi_1\leq \xi_2\leq \cdots\leq \cdots \xi_N$. Then it is 
very natural to define a matrix function $q(Z)$, including 
\eqref{eq:matrix_func_special} as a special case, by 
\benn
q(Z)=Qq(\Xi)Q^{-1},
\eenn
where $\Xi$ is a diagonal matrix with $\Xi_{nn}=\xi_n$, 
$Q$ consists of the eigenvectors associated to the eigenvalues $\xi_n$
and $[q(\Xi)]_{nn}=q(\xi_n)$. This yields a well-defined 
fractional power $(B_\Delta)^\alpha$ when applied to $q(z)=z^\alpha$ and 
can then also be applied to define \eqref{eq:matrix_func_special}.
Unfortunately, the matrix transfer technique does not generalize immediately
to the case $\theta\neq 0$ as the spectrum $\sigma(D^\alpha_\theta)$
for $\theta\neq 0$ is generically continuous with complex eigenvalues as 
already discussed in Section \ref{ssec:Spectral}. For more on the matrix
transfer technique we refer to \cite{YangTurnerLiu,YangTurnerLiuIlic}.

\subsection{Integral representation, quadrature and projection boundary conditions.}
\label{ssec:ours}

Sections \ref{ssec:Spectral}-\ref{ssec:mtransfer} explain why, to the best of our 
knowledge, there seem to be very few (if any) detailed numerical studies of the 
asymmetric case $\theta\neq0$ for the nonlinear Allen-Cahn/Nagumo-type Riesz-Feller 
reaction-diffusion equation \eqref{RD}. 

Here we present an easy-to-implement method to study \eqref{RD} 
numerically with a focus on the dynamics of traveling waves.
Our approach is to use the integral representation of Riesz-Feller 
operators to view \eqref{RD} as an integro-differential equation.
For $\alpha\in(1,2)$ the representation formula is given by 
\cite{AchleitnerKuehn1}
\bea
\label{eq:rep}
(D^\alpha_\theta u)(x,t)&=&c_1\int_0^\I \frac{u(x+\xi,t)-u(x,t)-\xi\frac{\partial u}{\partial x}(x,t)}{\xi^{1+\alpha}}\dxi\nonumber\\
&&+c_2\int_0^\I \frac{u(x-\xi,t)-u(x,t)+\xi\frac{\partial u}{\partial x}(x,t)}{\xi^{1+\alpha}}\dxi
\eea
where the constants $c_{1,2}$ are given in \cite{MainardiLuchkoPagnini} as
\benn
c_1=\frac{\Gamma(1+\alpha)\sin\left((\alpha+\theta)\frac\pi2\right)}{\pi}\quad\text{and}
\quad c_2=\frac{\Gamma(1+\alpha)\sin\left((\alpha-\theta)\frac\pi2\right)}{\pi}.
\eenn
Note that there is also an integral representation for $\alpha\in(0,1)$ \cite{AchleitnerKuehn1}
for $x\in\R$. Furthermore, there is an analogous integral representation formula for fractional Laplacians in $\R^d$ 
in \cite{DroniouImbert}. Hence, starting from a representation like \eqref{eq:rep} is not really
a restriction, even for higher-dimensional cases. Furthermore, a similar strategy has also been applied 
successfully in a similar to context to other nonlocal operator equations \cite{AlibaudAzeradIsebe} 
involving traveling waves. If we write 
\beann
g_1(\xi,x,t)&:=&\frac{u(x+\xi,t)-u(x,t)-\xi\frac{\partial u}{\partial x}(x,t)}{\xi^{1+\alpha}},\\
g_2(\xi,x,t)&:=&\frac{u(x-\xi,t)-u(x,t)+\xi\frac{\partial u}{\partial x}(x,t)}{\xi^{1+\alpha}},
\eeann
then we can simply re-write \eqref{RD} as an integro-differential equation
\be
\label{eq:PIDE}
\frac{\partial u}{\partial t}(x,t)=c_1\int_0^\I g_1(\xi,x,t)\dxi
+c_2\int_0^\I g_2(\xi,x,t)\dxi+f(u(x,t)).
\ee
For simplicity, we shall just introduce our method for a uniform spatial 
mesh \eqref{eq:space_grid}, {i.e.} we have
\be
\label{eq:space_grid_uniform}
-b=x_1<x_2<\cdots<x_N=b\qquad \text{with}\quad x_{n+1}-x_n=\frac{2b}{N-1}=:h,
\ee
where we assume that $N\geq 3$ is odd so that $x_{(N+1)/2}=0$. Furthermore, 
we use another spatial mesh to approximate the integral operators \eqref{eq:rep} 
over a finite domain obtained as a sub-mesh from \eqref{eq:space_grid_uniform} as
follows
\be
\xi_1=x_{\frac{N+1}{2}+1},\quad \xi_2=x_{\frac{N+1}{2}+2},\quad \ldots,\quad\xi_{M+1}=x_N,
\ee
which has $M$ subintervals $[\xi_m,\xi_{m+1}]$. We may easily relate $M$ to the
number of points $N$ in our original mesh by $M=(N-1)/2$. For $b,N$
sufficiently large we may just use a quadrature rule to approximate \eqref{eq:rep};
we remark that the possibility to use quadrature techniques for time-fractional 
Caputo-derivative fractional equations has already been noticed in 
\cite{Agrawal,KumarAgrawal}. Very recently (in fact, during the preparation of this work), 
Huang and Oberman \cite{HuangOberman} proposed a quadrature-scheme based upon a 
singular integral presentation of the symmetric case $D^\alpha_0$. We use the 
regularized, fully asymmetric representation \eqref{eq:rep} and obtain for the trapezoidal 
rule, with $\rho\in\{1,2\}$, that
\begin{multline*}
c_\rho\int_0^\I g_\rho(\xi,x,t)\dxi \approx c_\rho\int_h^b g_\rho(\xi,x,t)\dxi \\
 \approx \frac{c_\rho(b-h)}{2M}\left[g_\rho(\xi_1,x,t)+g_\rho(\xi_{M+1},x,t)+2\sum_{m=2}^Mg_\rho(\xi_m,x,t)\right].
\end{multline*}
Using this approximation in \eqref{eq:PIDE} yields a system of (formal) ODEs for $u_n(t)=u(x_n,t)$, 
 which can be written as 
\bea
\frac{\textnormal{d}u_n}{\textnormal{d}t}&=&\frac{b-h}{2M}\left[c_1\left((g_1(u))_{n,1}+(g_1(u))_{n,M+1}\right)+2c_1\sum_{m=2}^M(g_1(u))_{n,m}
\right.\nonumber\\
&&\left.+c_2\left((g_2(u))_{n,1}+(g_2(u))_{n,M+1}\right)+2c_2\sum_{m=2}^M(g_2(u))_{n,m}\right]+f(u_n)\label{eq:MoL}
\eea
for $n\in\{1,2,\ldots,N\}$, where the terms involving $(g_\rho(u))_{n,m}$ are given by
\benn
(g_1(u))_{n,m}=\tfrac{u_{n+m}-u_n}{\xi_m^{1+\alpha}}-\tfrac{u_{n+1}-u_n}{\xi_m^{\alpha}h}, 
\quad (g_2(u))_{n,m}=\tfrac{u_{n-m}-u_n}{\xi_m^{1+\alpha}}+\tfrac{u_{n+1}-u_n}{\xi_m^{\alpha}h}.
\eenn
Of course, the system \eqref{eq:MoL} is, as yet, only a formal representation as it involves spatial 
mesh indices for $u$ which lie outside the range {i.e.}~$u_n=u(x_n,t)$ for $n\in\{1,2,\ldots,N\}$. There
is a choice of boundary conditions. However, instead of classical Neumann or Dirichlet conditions, we
want to compute traveling waves which satisfy
\benn
\lim_{x\ra-\I}u(x,t)=u_-,\qquad \lim_{x\ra+\I}u(x,t)=u_+
\eenn
for constants $u_\pm$. Hence, we adopt the following projection-type boundary conditions for the
numerical method
\be
\label{eq:bc}
u_n=\left\{\begin{array}{ll}u_N &\text{ if $n\geq N$,}\\ u_1 &\text{ if $n\leq 1$,}\end{array}\right.
\ee
Using \eqref{eq:bc}, we get a well-defined ODE system \eqref{eq:MoL} which can be solved using 
forward integration, {i.e.}~we adopt a method-of-lines approach; for more details on using 
projection boundary conditions to compute traveling waves in the classical FitzHugh-Nagumo equation 
we refer {e.g.} to \cite{Doedel_AUTO1997,GuckenheimerKuehn1,GuckenheimerKuehn3}.

Regarding our algorithm \eqref{eq:MoL}-\eqref{eq:bc} for waves of the Riesz-Feller
bistable equation, we emphasize that our approach is clearly non-optimal
from a numerical perspective. For example, there are straightforward generalizations 
to non-uniform meshes and higher-order schemes by using non-uniform-mesh higher-order 
quadrature methods. We leave these generalizations as future challenges. Here, we are 
primarily interested in developing a simple scheme for \eqref{RD} and to visualize 
some of the results from Theorem~\ref{thm:main}.

\subsection{Numerical results.}
\label{ssec:numres}

In this section, we briefly discuss some numerical simulations of~\eqref{RD}
 with $f(u)=u(1-u)(u-a)$ for some $a\in(0,1)$
 using our algorithm from Section \ref{ssec:ours}. 
Unless stated otherwise, we fix $\Omega=[-b,b]=[-30,30]$, $N=181$, spatial mesh points and  
always employ a standard stiff ODE solver to solve \eqref{eq:MoL}-\eqref{eq:bc} 
(more precisely, \texttt{ode15s} from MatLab \cite{ShampineReichelt}) for $t\in[0,T]$. 
Figure \ref{fig:fig01}(a) shows the initial condition  
\be
\label{eq:ic_Chen}
u_0(x)=u(x,0)=\left\{\begin{array}{ll}
0 &\text{if $x\in[-30,-2)$},\\ 
\frac14x+\frac12 &\text{if $x\in[-2,2]$},\\ 
1 &\text{if $x\in(2,30]$}.\\ 
\end{array}\right.
\ee
The initial condition \eqref{eq:ic_Chen} is important as it has been
used in the existence part of the proof of Theorem~\ref{thm:main} as discussed in \cite{AchleitnerKuehn1,Chen1}.
In particular, $u_0$ is shown to converge to a traveling wave.

\begin{figure}[htbp]
\psfrag{a}{(a)}
\psfrag{b}{(b)}
\psfrag{c}{(c)}
\psfrag{x}{$x$}
\psfrag{t}{$t$}
\psfrag{u}{$u$}
\psfrag{u0}{$u_0$}
\centering
\includegraphics[width=1\textwidth]{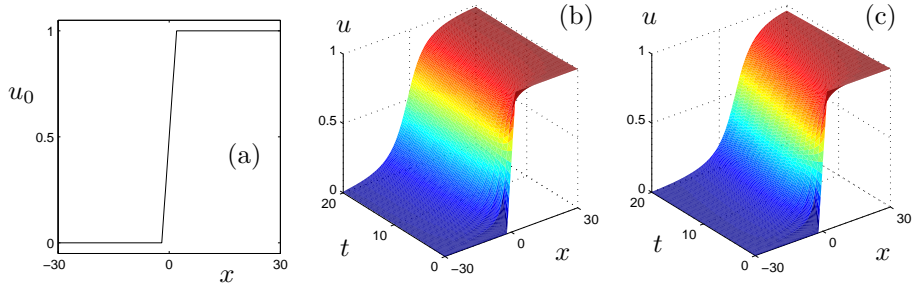}
\caption{Fixed parameter values are $\theta=0.1$, $\alpha=1.8$, $T=20$. 
 (a) Initial condition $u_0=u(x,0)$ given by \eqref{eq:ic_Chen}.
 (b) Simulation with $a=0.5$.
 (c) Simulation with $a=0.6$, the wave travels to the right.
}
\label{fig:fig01}
\end{figure} 

Figure \ref{fig:fig01}(b)-(c) show the fully asymmetric fractional case with $D^\alpha_\theta$ 
for $\alpha=1.8$ and $\theta=0.1$. 
In both cases we observe a rapid smoothing effect of the solution
 as predicted by the smoothing result in Theorem~\ref{thm:main}. 
Furthermore, in both cases, convergence to a traveling wave profile is observed,
 where moving the parameter $a$ changes the wave speed. Again, this is 
expected since the supremum-norm of the nonlinearity $f(u)=u(1-u)(u-a)$ does influence the 
wave speed.

\begin{figure}[htbp]
\psfrag{a}{(a)}
\psfrag{b}{(b)}
\psfrag{c}{(c)}
\psfrag{x}{$x$}
\psfrag{t}{$t$}
\psfrag{u}{$u$}
	\centering
		\includegraphics[width=1\textwidth]{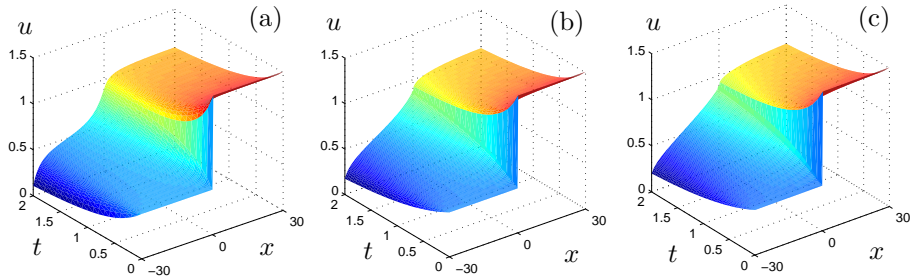}
	\caption{\label{fig:fig02}Fixed parameter values are $\theta=0.1$, $a=0.5$, $T=2$ with
	initial condition \eqref{eq:ic1}.
	(a) $\alpha=1.8$. (b) $\alpha=1.2$. (c) $\alpha=1.01$.}
\end{figure} 

As a second interesting part we are interested in discontinuous initial conditions bounded
away from the traveling wave, and even violating one of the stability assumptions ($0\leq u_0\leq 1$)
from Theorem~\ref{thm:main}. One example is
\be
\label{eq:ic1}
u_0(x)=u(x,0)=\left\{\begin{array}{ll}
0.49 &\text{if $x\in[-30,0]$},\\ 
1.51 &\text{if $x\in(0,30]$}.\\ 
\end{array}\right.
\ee
Furthermore, we vary the fractional exponent $\alpha$. Figure \ref{fig:fig02} shows the results.
Although the initial condition is not within the framework of the theoretical analysis, we still
observe extremely rapid convergence to a wave profile where the end-states move to $u(-b,t)=0$ and 
$u(b,t)=1$. Note however, that the convergence, as well
as the regularization effect, seems to be slower for smaller exponents $\alpha$.

\begin{figure}[htbp]
\psfrag{a}{(a)}
\psfrag{b}{(b)}
\psfrag{c}{(c)}
\psfrag{x}{$x$}
\psfrag{t}{$t$}
\psfrag{u}{$u$}
	\centering
		\includegraphics[width=1\textwidth]{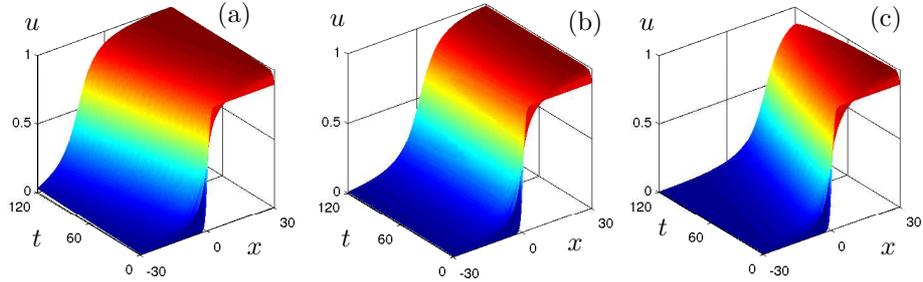}
	\caption{\label{fig:fig03}Fixed parameter values are $\alpha=1.5$, $a=0.5$, $T=120$ with
	initial condition \eqref{eq:ic_Chen}.
	(a) $\theta=0.2$; wave moves to the left. (b) $\theta=0.0$; standing wave. (c) $\theta=-0.2$; wave moves to the right.}
\end{figure} 

Another question is the effect of the asymmetry parameter $\theta$. Figure \ref{fig:fig03} shows
three different cases for $\theta=0.2,0,-0.2$. It is clearly visible that the wave speed is
directly affected. Within the time $t\in[0,T]$, the wave in Figure \ref{fig:fig02}(b) barely moves
while there is a drift to the right in Figure \ref{fig:fig02}(c) and to the left in
Figure \ref{fig:fig02}(a). Hence, we may conclude that the asymmetry parameter definitely has an 
effect on quantitative properties of traveling waves. Based on the relation to microscopic 
super-diffusion processes and previous studies for other nonlinearities \cite{MancinelliVergniVulpiani},
a quantitative change is expected.

\begin{figure}[htbp]
\psfrag{a}{(a)}
\psfrag{b}{(b)}
\psfrag{c}{(c)}
\psfrag{x}{$x$}
\psfrag{t}{$t$}
\psfrag{u}{$u$}
	\centering
		\includegraphics[width=1\textwidth]{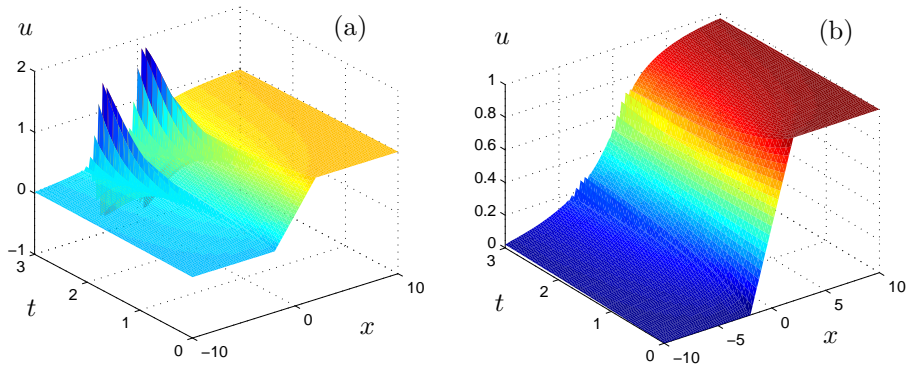}
	\caption{\label{fig:fig04}Fixed parameter values are $\alpha=1.5$, $a=0.5$, $T=3$, $\theta=0.4$ with
	$b=10$ and $L=101$ {i.e.}~on a coarser grid than in the previous figures.
	(a) Absolute tolerance for the ODE time stepper is $10^{-6}$. 
	(b) Absolute tolerance for the ODE time stepper is $10^{-9}$.}
\end{figure} 

As a last issue, we briefly discuss the influence of the asymmetry parameter on numerical
stability. Figure \ref{fig:fig04} shows simulations for the same parameter values $\alpha=1.5$,
$\theta=0.4$ where $\theta$ is chosen closer to the critical line $2-\alpha$ (see Section~\ref{sec:TWP}) than
before. The absolute error tolerance for the numerical time step is different in Figures 
\ref{fig:fig04}(a)-(b). Whereas we observe numerically induced oscillations in Figure \ref{fig:fig04}(a)
for a relatively low tolerance, the oscillations are suppressed for the more accurate computation 
in Figure \ref{fig:fig04}(b). We checked that the numerical solution poses no problem for the 
lower error tolerance when $\theta$ is lower as well, for example, $\theta=0.1$. This gives a
strong indication that the ODE problem may be stiff, respectively that the region of A-stability
shrinks when $\theta$ is changed. In particular, this leads to the conjecture that the asymmetric
case is not only more complicated with respect to the design and implementation of numerical 
algorithms but also with respect to numerical stability.

\subsection{Numerical analysis: some challenges.}

In this section, we would like to highlight some numerical challenges/conjectures which are 
relevant for future work:

\begin{enumerate}
 \item Provide a generalization of our scheme to higher-order quadrature rules and non-uniform 
 meshes, including convergence and error analysis. 
 \item Generalize the scheme to $2$- and $3$-dimensional cases. What about the computation of 
 coherent/localized structures for this case?
 \item Investigate the numerical stability properties of algorithms for asymmetric fractional evolution 
 equations regarding the $(\alpha,\theta)$-dependence.
 \item Compare various approaches to truncate the domain $\R$.
 What is the influence of boundary conditions for space-fractional equations?
 \item What about adaptive algorithms to resolve wave profiles? What is the influence of $\alpha$
 and $\theta$ on the adaptive mesh selection?
 \item Provide robust methods, including error estimates, to calculate the wave-speed and 
 far-field/tail behavior. 
 \item Which methods for fractional diffusion equations, derived by different approaches such 
 as FDM, FEM or quadrature, are equivalent? 
\end{enumerate}

\section*{Acknowledgements.}

CK would like to thank the Austrian Academy of Sciences ({\"{O}AW}) 
for support via an APART fellowship. CK also acknowledges the European Commission (EC/REA) for 
support by a Marie-Curie International Re-integration Grant.

\newpage 

\section{Bibliography.}

\baselineskip=0.9\normalbaselineskip

\bibliographystyle{CAIMbibstyle}

\bibliography{bibliography}

\begin{thebibliography}{10}

\bibitem{Aronson+Weinberger:1974}
D.~Aronson and H.~Weinberger, Nonlinear diffusion in population genetics,
  combustion, and nerve pulse propagation, in {\em Partial Differential
  Equations and Related Topics}, vol.~446 of {\em Lecture Notes in
  Mathematics}, pp.~5--49, Springer, 1974.

\bibitem{Smoller:1994}
J.~Smoller, {\em Shock waves and reaction-diffusion equations}.
\newblock Springer, 1994.

\bibitem{Volpert+etal:1994}
A.~I. Volpert, V.~A. Volpert, and V.~A. Volpert, {\em Traveling wave solutions
  of parabolic systems}, vol.~140 of {\em Translations of Mathematical
  Monographs}.
\newblock Providence, RI: American Mathematical Society, 1994.
\newblock Translated from the Russian manuscript by James F. Heyda.

\bibitem{McKean:1970}
H.~McKean, {Nagumo's equation}, {\em Advances in mathematics}, vol.~4,
  pp.~209--223, 1970.

\bibitem{Nagumo}
J.~Nagumo, S.~Arimoto, and S.~Yoshizawa, An active pulse transmission line
  simulating nerve axon, {\em Proc. IRE}, vol.~50, pp.~2061--2070, 1962.

\bibitem{Newell+Whitehead:1969}
A.~Newell and J.~Whitehead, Finite bandwidth, finite amplitude convection, {\em
  The Journal of Fluid Mechanics}, vol.~38, p.~279–303, 1969.

\bibitem{Segel:1969}
L.~Segel, Distant side-walls cause slow amplitude modulation of cellular
  convection, {\em The Journal of Fluid Mechanics}, vol.~38, p.~203–224,
  1969.

\bibitem{Allen+Cahn:1979}
S.~M. Allen and J.~W. Cahn, A microscopic theory for antiphase boundary motion
  and its application to antiphase domain coarsening, {\em Acta Metallurgica},
  vol.~27, no.~6, pp.~1085--1095, 1979.

\bibitem{Bates+etal:1997}
P.~W. Bates, P.~C. Fife, X.~Ren, and X.~Wang, {Traveling waves in a convolution
  model for phase transitions}, {\em Archive for Rational Mechanics and
  Analysis}, vol.~138, pp.~105--136, 1997.

\bibitem{Chen:1997}
X.~Chen, Existence, uniqueness, and asymptotic stability of travelling waves in
  nonlocal evolution equations, {\em Adv. Differential Equations}, vol.~2,
  pp.~125--160, 1997.

\bibitem{Zanette:1997}
D.~Zanette, {Wave fronts in bistable reactions with anomalous L\'{e}vy-flight
  diffusion}, {\em Physical Review E}, vol.~55, no.~1, pp.~1181--1184, 1997.

\bibitem{Nec+etal:2008}
Y.~Nec, A.~A. Nepomnyashchy, and A.~A. Golovin, Front-type solutions of
  fractional {A}llen-{C}ahn equation, {\em Phys. D}, vol.~237, no.~24,
  pp.~3237--3251, 2008.

\bibitem{Volpert+etal:2010}
V.~A. Volpert, Y.~Nec, and A.~A. Nepomnyashchy, {Exact solutions in front
  propagation problems with superdiffusion}, {\em Physica D: Nonlinear
  Phenomena}, vol.~239, pp.~134--144, Feb. 2010.

\bibitem{Cabre+Sire:2010}
X.~Cabr\'{e} and Y.~Sire, {Nonlinear equations for fractional Laplacians I:
  Regularity, maximum principles, and Hamiltonian estimates}, {\em arXiv
  preprint arXiv:1012.0867}, vol.~01, pp.~1--41, 2010.

\bibitem{Cabre+Sire:2011}
X.~Cabr\'{e} and Y.~Sire, {Nonlinear equations for fractional Laplacians II:
  existence, uniqueness, and qualitative properties of solutions}, {\em arXiv
  preprint arXiv:1111.0796}, vol.~01, 2011.

\bibitem{Palatucci+etal:2013}
G.~Palatucci, O.~Savin, and E.~Valdinoci, {Local and global minimizers for a
  variational energy involving a fractional norm}, {\em Annali di Matematica
  Pura ed Applicata}, vol.~192, pp.~673--718, Jan. 2013.

\bibitem{Gui:2012}
C.~Gui and M.~Zhao. workshop presentation
  \url{http://birs.ca/events/2012/5-day-workshops/12w5100/videos}, 2012.

\bibitem{Chmaj:2013}
A.~Chmaj, {Existence of traveling waves in the fractional bistable equation},
  {\em Archiv der Mathematik}, vol.~100, pp.~473--480, May 2013.

\bibitem{Engler:2010}
H.~Engler, On the speed of spread for fractional reaction-diffusion equations,
  {\em Int. J. Differ. Equ.}, pp.~Art. ID 315421, 16, 2010.

\bibitem{MainardiLuchkoPagnini}
F.~Mainardi, Y.~Luchko, and G.~Pagnini, The fundamental solution of the
  space-time fractional diffusion equation, {\em Fract. Calc. Appl. Anal.},
  vol.~4, no.~2, pp.~153--192, 2001.

\bibitem{AchleitnerKuehn1}
F.~Achleitner and C.~Kuehn, ``Traveling waves for a bistable equation with
  nonlocal-diffusion.'' http://arxiv.org/abs/1312.6304v1, 2013.

\bibitem{Fife+McLeod:1977}
P.~Fife and J.~McLeod, The approach of solutions nonlinear diffusion equations
  to travelling front solutions, {\em Arch. Rational Mech. Anal.}, vol.~65,
  pp.~335--361, 1977.

\bibitem{Cabre+SolaMorales:2005}
X.~Cabr\'{e} and J.~Sol\`{a}-Morales, {Layer solutions in a half-space for
  boundary reactions}, {\em Communications on Pure and Applied Mathematics},
  vol.~LVIII, pp.~1678--1732, 2005.

\bibitem{Feller2}
W.~Feller, {\em An Introduction to Probability Theory and its Applications},
  vol.~2.
\newblock Wiley, 2nd~ed., 1972.

\bibitem{GorenfloMainardi}
R.~Gorenflo and F.~Mainardi, Random walk models for space-fractional diffusion
  processes, {\em Fract. Calc. Appl. Anal.}, vol.~1, no.~2, pp.~167--191, 1998.

\bibitem{Sato:1999}
K.~Sato, {\em L\'evy processes and infinitely divisible distributions}, vol.~68
  of {\em Cambridge Studies in Advanced Mathematics}.
\newblock Cambridge: Cambridge University Press, 1999.
\newblock Translated from the 1990 Japanese original, Revised by the author.

\bibitem{Schilling:1998}
R.~L. Schilling, Conservativeness and extensions of {F}eller semigroups, {\em
  Positivity}, vol.~2, no.~3, pp.~239--256, 1998.

\bibitem{SternEffenbergerFichtnerSchaefer}
R.~Stern, F.~Effenberger, H.~Fichtner, and T.~Sch{\"a}fer, The space-fractional
  diffusion-advection equation: analytical solutions and critical assessment of
  numerical solutions, {\em arXiv:1309.4263}, pp.~1--20, 2013.

\bibitem{YangLiuTurner}
Q.~Yang, F.~Liu, and I.~Turner, Numerical methods for fractional partial
  differential equations with {Riesz} space fractional derivatives, {\em Appl.
  Math. Model.}, vol.~34, no.~1, pp.~200--218, 2010.

\bibitem{BuenoOrovioKayBurrage}
A.~Bueno-Orovio, D.~Kay, and K.~Burrage, Fourier spectral methods for
  fractional-in-space reaction-diffusion equations, {\em preprint}, pp.~1--19,
  2012.

\bibitem{BarriosColoradodePabloSanchez}
B.~Barrios, E.~Colorado, A.~de~Pablo, and U.~S\'{a}nchez, On some critical
  problems for the fractional {Laplacian} operator, {\em J. Differential
  Equat.}, vol.~252, no.~11, pp.~6133--6162, 2012.

\bibitem{AlSaqabiBoyadjievLuchko}
B.~Al-Saqabi, L.~Boyadjiev, and Y.~Luchko, Comments on employing the
  {Riesz-Feller} derivative in the {Schr\"odinger} equation, {\em Eur. Phys. J.
  Special Topics}, vol.~222, pp.~1779--1794, 2013.

\bibitem{SaxenaMathaiHaubold}
R.~Saxena, A.~Mathai, and H.~Haubold, Computational solutions of distributed
  order reaction-diffusion systems associated with {Riemann-Liouville}
  derivatives, {\em arXiv:1211.0063v1}, pp.~1--12, 2012.

\bibitem{MeerschaertTadjeran}
M.~Meerschaert and C.~Tadjeran, Finite difference approximations for two-sided
  space-fractional partial differential equations, {\em Appl. Numer. Math.},
  vol.~56, no.~1, pp.~80--90, 2006.

\bibitem{ChenLiuTurnerAnh}
C.~Chen, F.~Liu, I.~Turner, and V.~Anh, A {Fourier} method for the fractional
  diffusion equation describing sub-diffusion, {\em J. Comp. Phys.}, vol.~227,
  no.~2, pp.~886--897, 2007.

\bibitem{GorenfloAbdelRehim}
R.~Gorenflo and E.~Abdel-Rehim, Convergence of the {Gr\"unwald-Letnikov} scheme
  for time-fractional diffusion, {\em J. Comp. Appl. Math.}, vol.~205, no.~2,
  pp.~871--881, 2007.

\bibitem{LiuAnhTurner}
F.~Liu, V.~Anh, and I.~Turner, Numerical solution of the space fractional
  fokker-planck equation, {\em J. Comput. Appl. Math.}, vol.~166, no.~1,
  pp.~209--219, 2004.

\bibitem{LiuZhuangAnhTurnerBurrage}
F.~Liu, P.~Zhuang, V.~Anh, I.~Turner, and K.~Burrage, Stability and convergence
  of the difference methods for the space-time fractional advection-diffusion
  equation, {\em Appl. Math. Comput.}, vol.~191, no.~1, pp.~12--20, 2007.

\bibitem{MeerschaertTadjeran1}
M.~Meerschaert and C.~Tadjeran, Finite difference approximations for fractional
  advection-dispersion flow equations, {\em J. Comp. Appl. Math.}, vol.~172,
  no.~1, pp.~65--77, 2004.

\bibitem{SchererKallaTangHuang}
R.~Scherer, S.~Kalla, Y.~Tang, and J.~Huang, The {Gr\"unwald-Letnikov} method
  for fractional differential equations, {\em Comput. Math. Appl.}, vol.~62,
  no.~3, pp.~902--917, 2011.

\bibitem{TadjeranMeerschaertScheffler}
C.~Tadjeran, M.~Meerschaert, and H.~Scheffler, A second-order accurate
  numerical approximation for the fractional diffusion equation, {\em J.
  Comput. Phys.}, vol.~213, no.~1, pp.~205--213, 2006.

\bibitem{ZhuangLiuAnhTurner}
P.~Zhuang, F.~Liu, V.~Anh, and I.~Turner, Numerical methods for the
  variable-order fractional advection-diffusion equation with a nonlinear
  source term, {\em SIAM J. Numer. Anal.}, vol.~47, no.~3, pp.~1760--1781,
  2009.

\bibitem{LanglandsHenry}
T.~Langlands and B.~Henry, The accuracy and stability of an implicit solution
  method for the fractional diffusion equation, {\em J. Comp. Phys.}, vol.~205,
  no.~2, pp.~719--736, 2005.

\bibitem{ShenLiu}
S.~Shen and F.~Liu, Error analysis of an explicit finite difference
  approximation for the space fractional diffusion equation with insulated
  ends, {\em ANZIAM J.}, vol.~46, pp.~C871--C887, 2005.

\bibitem{Yuste}
S.~Yuste, Weighted average finite difference methods for fractional diffusion
  equations, {\em J. Comp. Phys.}, vol.~216, no.~1, pp.~264--274, 2006.

\bibitem{ErvinHeuerRoop}
V.~Ervin, N.~Heuer, and J.~Roop, Numerical approximation of a time dependent
  nonlinear space-fractional diffusion equation, {\em SIAM J. Numer. Anal.},
  vol.~45, no.~2, pp.~572--591, 2007.

\bibitem{JinLazarovPasciakZhou}
B.~Jin, R.~Lazarov, J.~Pasciak, and Z.~Zhou, Error analysis of finite element
  methods for space-fractional parabolic equations, {\em arXiv:1310.0066v1},
  pp.~1--20, 2013.

\bibitem{ErvinRoop}
V.~Ervin and J.~Roop, Variational formulation for the stationary fractional
  advection dispersion equation, {\em Numer. Meth. PDE}, vol.~22, no.~3,
  pp.~558--576, 2006.

\bibitem{ErvinRoop1}
V.~Ervin and J.~Roop, Variational solution of fractional advection dispersion
  equations on bounded domains in {$\mathbb{R}^d$}, {\em Numer. Meth. PDE},
  vol.~23, no.~2, pp.~256--281, 2007.

\bibitem{FixRoof}
G.~Fix and J.~Roof, Least squares finite-element solution of a fractional order
  two-point boundary value problem, {\em Comput. Math. with Appl.}, vol.~48,
  no.~7, pp.~1017--1033, 2004.

\bibitem{Roop}
J.~Roop, Computational aspects of {FEM} approximation of fractional advection
  dispersion equations on bounded domains in {$\mathbb{R}^2$}, {\em J. Comp.
  Appl. Math.}, vol.~193, no.~1, pp.~243--268, 2006.

\bibitem{IlicLiuTurnerAnh}
M.~Ili{\'c}, F.~Liu, I.~Turner, and V.~Anh, Numerical approximation of a
  fractional-in-space diffusion equation {I}, {\em Frac. Calc. Appl. Anal.},
  vol.~8, no.~3, pp.~323--341, 2005.

\bibitem{IlicLiuTurnerAnh1}
M.~Ili{\'c}, F.~Liu, I.~Turner, and V.~Anh, Numerical approximation of a
  fractional-in-space diffusion equation {II} - with nonhomogeneous boundary
  conditions, {\em Frac. Calc. Appl. Anal.}, vol.~9, no.~4, pp.~333--349, 2006.

\bibitem{BurrageHaleKay}
K.~Burrage, N.~Hale, and D.~Kay, An efficient implicit {FEM} scheme for
  fractional-in-space reaction-diffusion equations, {\em SIAM J. Sci. Comput.},
  vol.~34, no.~4, pp.~A2145--A2172, 2012.

\bibitem{ErnGuermond}
A.~Ern and J.-L. Guermond, {\em Theory and Practice of Finite Elements}.
\newblock Springer, 2004.

\bibitem{YangTurnerLiu}
Q.~Yang, I.~Turner, and F.~Liu, Analytical and numerical solutions for the time
  and space-symmetric fractional diffusion equation, {\em ANZIAM J.}, vol.~50,
  pp.~C800--C814, 2009.

\bibitem{YangTurnerLiuIlic}
Q.~Yang, I.~Turner, F.~Liu, and M.~Ilic, Novel numerical methods for solving
  the time-space fractional diffusion equation in two dimensions, {\em SIAM J.
  Sci. Comput.}, vol.~33, no.~3, pp.~1159--1180, 2011.

\bibitem{DroniouImbert}
J.~Droniou and C.~Imbert, Fractal first-order partial differential equations,
  {\em Arch. Rational Mech. Anal.}, vol.~182, pp.~299--331, 2006.

\bibitem{AlibaudAzeradIsebe}
N.~Alibaud, P.~Azerad, and D.~Is{\`e}be, A non-monotone nonlocal conservation
  law for dune morphodynamics, {\em Differen. Integral Equat.}, vol.~23, no.~1,
  pp.~155--188, 2010.

\bibitem{Agrawal}
O.~Agrawal, A numerical scheme for initial compliance and creep response of a
  system, {\em Mech. Res. Commun.}, vol.~36, no.~4, pp.~444--451, 2009.

\bibitem{KumarAgrawal}
P.~Kumar and O.~Agrawal, An approximate method for numerical solution of
  fractional differential equations, {\em Signal Processing}, vol.~86, no.~10,
  pp.~2602--2610, 2006.

\bibitem{HuangOberman}
Y.~Huang and A.~Oberman, Numerical methods for the fractional {Laplacian} part
  {I}: a finite difference-quadrature approach, {\em arXiv:1311.7691v1},
  pp.~1--24, 2013.

\bibitem{Doedel_AUTO1997}
E.~Doedel, ``Auto 97: Continuation and bifurcation software for ordinary
  differential equations.'' http://indy.cs.concordia.ca/auto, 1997.

\bibitem{GuckenheimerKuehn1}
J.~Guckenheimer and C.~Kuehn, Homoclinic orbits of the fitzhugh-nagumo
  equation: The singular limit, {\em DCDS-S}, vol.~2, no.~4, pp.~851--872,
  2009.

\bibitem{GuckenheimerKuehn3}
J.~Guckenheimer and C.~Kuehn, Homoclinic orbits of the fitzhugh-nagumo
  equation: Bifurcations in the full system, {\em SIAM J. Appl. Dyn. Syst.},
  vol.~9, pp.~138--153, 2010.

\bibitem{ShampineReichelt}
L.~Shampine and M.~Reichelt, The {MatLab ODE} suite, {\em SIAM J. Sci.
  Comput.}, vol.~18, no.~1, pp.~1--22, 1997.

\bibitem{Chen1}
X.~Chen, Existence, uniqueness, and asymptotic stability of travelling waves in
  nonlocal evolution equations, {\em Adv. Differential Equations}, vol.~2,
  pp.~125--160, 1997.

\bibitem{MancinelliVergniVulpiani}
R.~Mancinelli, D.~Vergni, and A.~Vulpiani, Front propagation in reactive
  systems with anomalous diffusion, {\em Physica D}, vol.~185, no.~3,
  pp.~175--195, 2003.

\end{thebibliography}

\end{document}